\documentclass[12pt,reqno]{amsart}
\usepackage{tikz}
\usetikzlibrary{positioning}
\usetikzlibrary{arrows}
\usetikzlibrary{shapes.multipart}

\usepackage[T1]{fontenc}
\usepackage[utf8]{inputenc}
\usepackage{appendix}
\usepackage{paralist}
\usepackage{longtable}
\usepackage{graphicx}
\usepackage{amsmath}
\usepackage{fullwidth}
\usepackage{amssymb}
\usepackage{upgreek}
\usepackage{marvosym}
\usepackage{amsthm}
\usepackage{caption}
\usepackage{subcaption}
\usepackage{amstext}
\usepackage{array}
\usepackage{euler}
\usepackage{times}
\usepackage{lipsum}
\usepackage{xspace}

\usepackage{tabularx,ragged2e,booktabs,caption}
\newcolumntype{C}[1]{>{\Centering}m{#1}}

\usepackage{accents}

\usepackage{kpfonts}
\usepackage[margin=1in]{geometry}
\vfuzz2pt 
\hfuzz2pt 
\numberwithin{equation}{section}
\usepackage{algorithm}
\usepackage{algorithmic}
\makeatletter
\def\listofalgorithms{\@starttoc{loa}\listalgorithmname}
\def\l@algorithm{\@tocline{0}{3pt plus2pt}{0pt}{1.9em}{}}
\renewcommand{\ALG@name}{Algorithm}
\renewcommand{\listalgorithmname}{List of \ALG@name s}
\numberwithin{algorithm}{section}
\theoremstyle{definition}
\newtheorem{defn}[algorithm]{Definition}
\theoremstyle{remark}
\newtheorem{rem}[algorithm]{Remark}
\theoremstyle{theorem}
\newtheorem{thm}[algorithm]{Theorem}
\theoremstyle{proposition}
\newtheorem{prop}[algorithm]{Proposition}
\theoremstyle{example}

\newtheorem{exmp}{Example}[section]
\theoremstyle{corollary}

\makeatother


\usepackage{scalerel}

\usepackage{hyperref}
\hypersetup{
    colorlinks=true,
    linkcolor=blue,
    filecolor=magenta,
    urlcolor=cyan,
}
\urlstyle{same}
\begin{document}
\title[Value function reformulation for multiobjective bilevel optimization]{\Large{E\MakeLowercase{xtension of the value function reformulation}\\[0.8ex] \MakeLowercase{to multiobjective bilevel optimization}}}

\author[]{Lahoussine Lafhim$^\dag$ \and  Alain  Zemkoho$^\ddag$}

\subjclass[2010]{90C26, 90C31, 90C46, 49K99}%
\keywords{Multiobjective bilevel optimization, frontier map,  strong domination property, coderivative, \and optimality conditions}%
\thanks{${}^\dag$Laboratoire LASMA, Department of Mathematics, Sidi Mohammed Ben Abdellah University, Morocco (email: \textsf{lahoussine.lafhim@usmba.ac.ma})}%
\thanks{${}^\ddag$School of Mathematical Sciences, University of Southampton, UK (email: \textsf{a.b.zemkoho@soton.ac.uk}).}%

\date{\today}

\begin{abstract}
We consider a multiobjective bilevel optimization problem with vector-valued upper- and lower-level objective functions. Such problems have attracted a lot of interest in recent years. However, so far, scalarization has appeared to be the main approach used to deal with the lower-level problem. Here, we utilize the concept of frontier map that extends the notion of optimal value function to our parametric multiobjective lower-level problem. Based on this, we build a tractable constraint qualification that we use to derive necessary optimality conditions for the problem. Subsequently, we show that our resulting necessary optimality conditions represent a natural extension from standard optimistic bilevel programs with scalar objective functions. 
\end{abstract}
\maketitle

\section{Introduction}
A standard bilevel optimization problem involves the minimization of a real-valued function under a constraint set partly constrained by the optimal solution set of a parametric optimization problem with a scalar objective function; see, e.g., \cite{DempeZemkohoBook} for the most recent surveys on the topic. However, in the last two to three decades, significant attention has being paid to the generalization of this model to the case where the upper- and/or lower-level objective functions are vector-valued. This is precisely the main focus of the work in this paper, as we consider the optimization problem
\begin{equation}\label{optimistic}\tag{MUL}
\mathbb{R}^{p}_+ - {\displaystyle \min_{x,y}} \ F\left( x,y\right) \ \ \text{s.t.} \ x\in X , \ \ y\in S\left( x\right),
\end{equation}
where the function $F :\mathbb{R}^{n}\times\mathbb{R}^{m}\rightarrow \mathbb{R}^{p}$ (with $p\geq 2$) represents the upper-level objective function, while $X \subseteq \mathbb{R}^{n}$ corresponds to the upper-level feasible set. As for the set-valued mapping $S :\mathbb{R}^n \rightrightarrows \mathbb{R}^m$, it collects the optimal solutions of the lower-level problem
\begin{equation}\label{llpill}\tag{L[$x$]}
\mathbb{R}^{q}_+ - {\displaystyle \min_{y}} \ f\left( x,y\right) \ \ \text{s.t.} \ \ y\in Y\left( x\right)
\end{equation}
for a given $x\in \mathbb{R}^n$. Here, $f :\mathbb{R}^{n}\times\mathbb{R}^{m}\rightarrow \mathbb{R}^{q}$ (with $q\geq 2$) represents the lower-level objective function, while the set-valued map $Y :\mathbb{R}^n \rightrightarrows \mathbb{R}^m$ describes the lower-level feasible set.

We associate with the (multiobjective) lower-level problem  \eqref{llpill} the corresponding frontier map $\varPhi :\mathbb{R}^{n} \rightrightarrows \mathbb{R}^{q}$ defined by
\begin{equation}\label{intro1}
\varPhi\left( x\right) := \text{Eff}/\mbox{WEff} \left(f\left( x,Y\left(x\right) \right); \,\mathbb{R}_{+}^{q}\right),
\end{equation}
where the notation $\text{Eff}/\mbox{WEff}$ is used to reflect the fact that optimality in \eqref{intro1} is in the sense of  efficient Pareto (\text{Eff}) or weakly efficient Pareto  ($\mbox{WEff}$) optimality. In the sequel, we will simply write $\varPhi^{E}\left( x\right)=\text{Eff}\left(f\left( x,Y\left(x\right) \right); \,\mathbb{R}_{+}^{q}\right)$ $\left(\mbox{resp}. \varPhi^{W}\left( x\right)=\mbox{WEff} \left(f\left( x,Y\left(x\right) \right); \,\mathbb{R}_{+}^{q}\right)\right)$ when referring to Pareto (resp. weakly Pareto) efficiency in situations where it is necessary to distinguish between these two concepts, which are defined in the next section. Obviously, based on \eqref{intro1}, the set-valued mapping $S : \mathbb{R}^{n} \rightrightarrows \mathbb{R}^{m}$ can be rewritten as
\begin{equation}\label{intro2}
S\left( x\right) :=\left\{y\in Y\left( x\right): \;\; f\left( x,y\right) \in \varPhi\left( x\right)  \right\} \;\, \mbox{ for }\;\,  x\in \mathbb{R}^n.
\end{equation}
Hence, our problem \eqref{optimistic}--\eqref{llpill} can be equivalently written as
\begin{equation}\label{optimistic1}
\mathbb{R}^{p}_+ - {\displaystyle \min_{x,y}} \ F\left( x,y\right) \ \ \text{s.t.} \ x\in X , \;\; y\in Y\left( x\right), \;\;  f\left( x,y\right) \in \varPhi\left(x\right) .
\end{equation}

Note that in the case where $q=1$, meaning that our lower-level problem \eqref{llpill} is simply a standard scalar objective parametric optimization problem, then the frontier map $\varPhi$ reduces to the corresponding optimal value function. And therefore, problem \eqref{optimistic1} will become the standard lower-level value function (LLVF) reformulation well-known in bilevel optimization with scalar objective functions; see, e.g.,  \cite{DDM2007,DemMorZemSIAM,DZ2011,YeZhu1995,YeZhu1995Cor},  for more details on this class of the problem. Hence, clearly, \eqref{optimistic1}  is a natural extension of the LLVF reformulation to the multiobjective bilevel optimization problem \eqref{optimistic}--\eqref{llpill}; thus we labelled it as such throughout this paper.

The number of publications around problem \eqref{optimistic}--\eqref{llpill} or the semivectorial version of the problem, where only the lower-level is multiobjective has been growing significantly over the last decade. Recent surveys on the subject include \cite{Eichfelder2020,SinhaMaloDeb2017}, where overviews of different types of solution algorithms are given. However, our main interest here is on constructing necessary optimality conditions for problem \eqref{optimistic}--\eqref{llpill}; a common point of most works on optimality conditions of this problem is that they rely on some form of scalarization to deal with the multiobjective nature of the lower-level problem \eqref{llpill}; for recent surveys on the subject, see, e.g.,  \cite{vectlower2,DM202} and references therein. 

Additionally, in the latter references, the LLVF reformulation is common after the scalarization step, although  \cite{zemko1} provides a different perspective. Subsequently, as in the case where $p=1$ and $q=1$, the standard approach to develop necessary optimality conditions for the corresponding model, after scalarization, has been the concept of partial calmness introduced in \cite{YeZhu1995}. However, given that partial calmness is, in some sense, equivalent to partial exact penalization of the corresponding value function constraint, it is unclear how such an approach can be directly applied to \eqref{optimistic1} when $p>1$ or $q>1$. Hence, our first main focus in this paper (see Section \ref{Generalized value function constraint qualification}) is to study the possibility to apply the concept of calmness of set-valued mapping, which is closely related to partial calmness \cite{DZ2011,henrionSuroview}. In Section \ref{Generalized value function constraint qualification}, we construct a tractable framework for a set-valued mapping tailored to \eqref{optimistic}--\eqref{llpill} to be used as constraint qualification (CQ) for the problem. In Section \ref{Necessary optimality conditions}, we show how this CQ can be used to develop necessary optimality conditions for problem \eqref{optimistic}--\eqref{llpill}. As a by-product of the regularity condition studied in Section \ref{Necessary optimality conditions}, we provide a new sufficient condition to derive stability for the optimal solution set-valued mapping $S$ \eqref{intro2}; i.e., for the estimation of its coderivative and Lipschitz-likeness.

Before we move to the development of the main results in Sections \ref{Generalized value function constraint qualification} and \ref{Necessary optimality conditions}, in the next section, we first provide some basic variational analysis and multiobjective optimization
concepts that will be needed in the sequel.

\section{Preliminaries}
\subsection{Tools from variational analysis}\label{Tools from var}
In this subsection, we present basic tools from variational analysis that will be used throughout the paper; more on the material covered here can be found in \cite{mordu2,mordu3}, for example. For some point $x\in\mathbb{R}^{n}$ and a scalar $\epsilon > 0$,
\begin{equation*}
	\mathbb{U}_{\epsilon}\left( x\right) :=\{y\in\mathbb{R}^{n}| \ \| y-x\|< \epsilon \}\;\, \mbox{ and }\;  \mathbb{B}_{\epsilon}\left( x\right) :=\{y\in\mathbb{R}^{n}| \ \| y-x\|\leq \epsilon \}
\end{equation*}
denote the open and closed $\epsilon$-ball around $x$, respectively. For brevity, we make use of $\mathbb{U}_{n}=\mathbb{U}_{1}\left( 0\right)$ and $\mathbb{B}_{n}=\mathbb{B}_{1}\left( 0\right)$. For a set-valued mapping $\Upsilon:  \mathbb{R}^{n}\rightrightarrows \mathbb{R}^{m}$, its  Painlev\'e-Kuratowski outer/upper limit at a point $\bar{x}$ is defined by
\begin{equation*}\label{kura}
{\displaystyle \limsup_{x\rightarrow \bar{x}}} \ \Upsilon \left( x\right) :=\left\lbrace x^{*}\in \mathbb{R}^{m}:\;\, \exists x_{k} \rightarrow \bar{x}, \ x^{*}_{k}\rightarrow x^{*}  \ \text{with} \ x^{*}_{k}\in \Upsilon \left( x_{k}\right)  \ \text{for all} \  k\in\mathbb{N} \right\rbrace.
\end{equation*}

Next, consider a set $\Omega\subset \mathbb{R}^{n}$, which is assumed to be closed around a point $\bar{x}\in \Omega$. The Fr\'echet normal cone to $\Omega$ at $\bar{x}\in\Omega$ is defined by
\begin{equation}\label{Frechet Normal}
\widehat{N}\left( \bar{x};\; \Omega\right):=\left\lbrace x^{*}\in \mathbb{R}^{n}:\;\, {\displaystyle \limsup_{\substack{ {x\overset{\Omega }{\rightarrow }\bar{x}} }}} \ \frac{\left\langle x^{*},x-\bar{x}\right\rangle }{\parallel x-\bar{x}\parallel} \leq 0\right\rbrace ,
\end{equation}
where $x\overset{\Omega }{\rightarrow }\bar{x}$ means that $x\rightarrow \bar{x}$ and $x\in\Omega$. Based on this concept, we can introduce the limiting/Mordukhovich normal cone $N\left(\bar{x};\Omega\right)$ to $\Omega$ at $\bar{x}$, which can be obtained by taking the sequential Painlev\'e-Kuratowski upper limits of the Fr\'echet normal cone in \eqref{Frechet Normal}:
\begin{equation*}\label{prem1}
N\left( \bar{x}; \; \Omega\right):= {\displaystyle \limsup_{\substack{ {x\overset{\Omega }{\rightarrow }\bar{x}} }}} \ \widehat{N}\left( x;\Omega\right).
\end{equation*}
If $\bar{x}\notin \Omega$, it is standard to set $N\left( \bar{x}; \;\Omega\right):=\emptyset$. We obviously have $\widehat{N}\left(\bar{x};\Omega\right)\subset N\left(\bar{x}; \; \Omega\right)$ and if the inclusion holds as equality, then we say that $\Omega$ is normally regular at $\bar{x}$. The class of normally regular sets includes convex ones and many other important sets in the field of variational analysis and optimization; see, e.g., \cite{mordu2}, for more details.


Let $\Upsilon: \mathbb{R}^{n}\rightrightarrows \mathbb{R}^{m}$ be a set-valued mapping  with its graph
\begin{equation*}
\text{gph} \ \Upsilon :=\left\lbrace \left( x,y\right) \in \mathbb{R}^{n} \times \mathbb{R}^{m}:\;\, y\in \Upsilon \left( x\right) \right\rbrace,
\end{equation*}
the normal coderivative of $\Upsilon$ at $\left( \bar{x},\bar{y}\right) \in \text{gph} \ \Upsilon$ is defined by
\begin{equation}\label{codDefinition}
D^{\ast}\Upsilon\left( \bar{x},\bar{y}\right)\left( y^{*}\right):=\left\lbrace x^{*}\in \mathbb{R}^{n}:\;\, \left(x^{*},-y^{*}\right) \in N\left( \left( \bar{x},\bar{y}\right); \;\text{gph} \ \Upsilon\right) \right\rbrace \ \ \text{for all} \ y^{*}\in \mathbb{R}^{m}.
\end{equation}
When $\Upsilon$ is a single-valued mapping, to simplify the notation, one writes $D^{\ast}\Upsilon\left( \bar{x}\right)\left( y^{*}\right)$ instead of $D^{\ast}\Upsilon\left( \bar{x},\Upsilon\left( \bar{x}\right) \right)\left( y^{*}\right)$. Furthermore, for a function $f: \mathbb{R}^{n}\rightarrow \mathbb{R}^{m}$ that is strictly differentiable at $\left( \bar{x},\bar{y}\right)$, we have the representation
\begin{equation*}
D^{\ast}f\left( \bar{x}\right)\left( y^{*}\right)=\left\lbrace \nabla f\left( \bar{x}\right)^{\top}y^{*}\right\rbrace \ \ \ \text{for all} \ y^{*}\in \mathbb{R}^{m}.
\end{equation*}

We conclude this subsection with some further properties of set-valued mappings. Consider a set-valued mapping $\Upsilon : \mathbb{R}^{n}\rightrightarrows \mathbb{R}^{m}$. It will be said to be Lipschitz-like around $\left( \bar{x},\bar{y}\right)$ if there exist neighbourhoods $U$ of $\bar{x}$, $V$ of $\bar{y}$, and a constant $l > 0$ such that
		\begin{equation*}
		\Upsilon\left( x\right) \cap V \subseteq \Upsilon\left( u\right) +l\parallel u-x\parallel\mathbb{B}_{m} \ \ \text{for all} \ x,u \in U.
		\end{equation*}
The weaker concept of calmness is said to hold for a set-valued map $\Upsilon$ at some point $\left( \bar{x},\bar{y}\right)$ if there exist neighbourhoods $U$ of $\bar{x}$, $V$ of $\bar{y}$, and a constant $l > 0$ such that
\begin{equation*}
d\left( y,\Upsilon\left( \bar{x}\right)\right) \leq   l \parallel x-\bar{x}\parallel \ \ \text{for all} \ x\in U \ \ \text{and} \ \ y\in V  \cap \Upsilon\left( x\right).
\end{equation*}

Considering the continuous functions $h_{i}:\mathbb{R}^{n}\times\mathbb{R}^{m}\rightarrow \mathbb{R}$ for $i=1, \ldots, q$, we associate the set-valued mapping  $\Upsilon$ defined by
\begin{equation}\label{Uphh}
\Upsilon\left(x\right) := \left\{y\in\mathbb{R}^{m} \ : \ h_{i}\left( x,y\right)  \leq 0, \ i=1,\cdots , q \right\}\;\mbox{ for }\; x\in \mathbb{R}^{n}.
\end{equation}
$\Upsilon$ \eqref{Uphh} will be said to be R-regular at $\left( \bar{x},\bar{y}\right)$ w.r.t. $\Omega\subseteq \mathbb{R}^{n}$ if there are some positive numbers $\sigma$, and $\delta$ such that for all $\left( x,y\right) \in \mathbb{U}_{\delta}\left(\bar{x},\bar{y}\right) \cap \Omega \times \mathbb{R}^{m}$,
\begin{equation}\label{R-regularityMap}
d\left(y, \, \Upsilon\left(x\right) \right) \ \leq \ \sigma \max \left\{0, \;\,\max\left\{h_{i}\left(x, y\right)|\;\, i=1, \cdots ,q\right\}\right\}.
\end{equation}
For more details on R-regularity, see \cite{MehlitzMinchenko2020} and references therein.

A set-valued mapping $\Upsilon: \mathbb{R}^{n}\rightrightarrows \mathbb{R}^{m}$ will be said to be order semicontinuous at a point $\left( \bar{x},\bar{y}\right)\in \text{gph} \ \Upsilon $, if for any sequence $\left( x_{k},y_{k}\right) \in \text{epi} \ \Upsilon$ converging to $\left( \bar{x},\bar{y}\right)$, there is a sequence $\left( x_{k},z_{k}\right) \in \text{gph} \ \Upsilon$ with $y_{k}-z_{k}\in \mathbb{R}_{+}^{m}$ such that $\left( z_{k}\right) $ contains a subsequence converging to $\bar{y}$. Here, $\text{epi} \ \Upsilon $ corresponds to the epigraph of $\Upsilon$ with respect to the ordering cone $\mathbb{R}_{+}^{m}$:
\begin{equation*}
	\text{epi} \ \Upsilon :=\left\lbrace \left( x,y\right) \in \mathbb{R}^{n} \times \mathbb{R}^{m}:\;\, y\in \Upsilon \left( x\right)+\mathbb{R}_{+}^{m} \right\rbrace,
\end{equation*}
Obviously, $\Upsilon$ will be order semicontinuous around $\left( \bar{x},\bar{y}\right)\in \text{gph} \ \Upsilon $ if there exists a neighbourhood $U$ of $\left( \bar{x},\bar{y}\right)$ such that $\Upsilon$ is order semicontinuous at any $\left( x,y\right) \in U\cap \text{gph} \ \Upsilon$.

\subsection{Multiobjective optimization concepts}
Let $C\subset \mathbb{R}^{n}$ be a pointed closed convex cone, with nonempty interior, introducing a partial order denoted  by $\preceq_{C}$ in $\mathbb{R}^{n}$.

\begin{defn}\label{Minimality}
Let $\Omega$ be a nonempty set of $\mathbb{R}^{n}$.  $x\in \Omega$ is said to be a Pareto (resp. weak Pareto)
efficient/minimal vector of $\Omega$ w.r.t. $C$ if
\begin{equation*}\label{parmin}
\Omega\subset x+\left[ \left( \mathbb{R}^{n}\setminus \left( -C\right) \right) \cup \left\lbrace 0\right\rbrace \right]  \ \ \ \left( \text{resp.} \  \Omega\subset x+ \left( \mathbb{R}^{n}\setminus -int C\right) \right),
\end{equation*}
where ``int'' denotes the topological interior of the set in question.
\end{defn}
In the sequel, the set of all the Pareto (resp. weak Pareto) efficient/minimal vectors of a set $\Omega$ w.r.t. $C$ is denoted by $\text{Eff}\left(\Omega;\; C\right)$ (resp. $\text{WEff}\left(\Omega; \;C\right)$).
Let us now consider the following multiobjective optimization problem with respect to the partial order introduced by the pointed, closed and convex cone $C$:
\begin{equation*}\label{rap1}
C-\min f (x) \ \ \text{s.t.} \ x\in \Omega,
\end{equation*}
where $f$ represents a vector-valued function and $\Omega$ the nonempty feasible set. For a
nonempty set $N\subset \Omega$, the image of $N$ by $f$ is defined by
\[
f\left(N\right) :=\left\lbrace f\left( x\right):\;\, x\in N\right\rbrace.
\]
\begin{defn}
A point $\bar{x}\in \Omega$ is said to be a Pareto (resp. weakly Pareto) optimal
solution of problem $\left( \ref{rap1}\right)$ if $f\left( \bar{x}\right)$ is a Pareto (resp. weak Pareto) minimal vector of $f\left( \Omega\right)$, i.e., $f\left( \bar{x}\right)\in \text{Eff}\left( f\left( \Omega\right) ;C\right)$ (resp. $f\left( \bar{x}\right)\in \text{WEff}\left( f\left( \Omega\right) ;C\right)$).
\end{defn}
Similarly, a point $\bar{x}\in \Omega$  is said to be a local Pareto (resp. weakly local Pareto)
optimal solution of problem $\left( \ref{rap1}\right)$ if there exists a neighborhood $U$ of $\bar{x}$ such that $f\left( \bar{x}\right)$ is a Pareto (resp. weak Pareto) minimal vector of $f \left( U\cap \Omega\right)$.
For our analysis of the multiobjective bilevel program \eqref{optimistic}, we will use either the concept of weakly efficient solution for the upper-level problem or the concept of efficient solution for the upper-level problem, and similarly, for the lower-level problem, both notions of efficient optimal solution and weakly efficient solution will be applied.

\section{Generalized value function constraint qualification}\label{Generalized value function constraint qualification}
We start this section by introducing the main constraint qualification that will be used to derive necessary optimality conditions for problem \eqref{optimistic1}. 
\begin{defn}
The generalized value function constraint qualification (GVFCQ) holds at $\left(\bar{x},\bar{y}\right)$ if the set-valued mapping $\Psi : \mathbb{R}^{n}\times\mathbb{R}^{q} \rightrightarrows \mathbb{R}^{n}\times\mathbb{R}^{m}$ defined by
\begin{equation}\label{calm1}
 \Psi\left(u,v\right) :=\left\{\left( x,y\right) \in \text{gph} \ Y: \;\;\left(\begin{array}{c}
                                                                               x \\
                                                                               f(x,y)
                                                                             \end{array}
  \right)+ \left(\begin{array}{c}
                                                                               u\\
                                                                              v
                                                                             \end{array}
  \right) \in \text{gph}~\varPhi \right\},
\end{equation}
 is calm at the point $\left(0, 0, \bar{x},\bar{y}\right)$.
\end{defn}
Note that if the lower-level problem \eqref{llpill} has a scalar objective function, then replacing the frontier map $\varPhi$ by the corresponding optimal value function $\varphi$, then the value function constraint qualification (VFCQ) in this case is obtained by replacing \eqref{calm1} with
\begin{equation}\label{CalPhi2}
\Psi_{\varphi}(v):=\left\{\left( x,y\right) \in \text{gph} \ Y: \;\; f(x,y)-\varphi(x)\leq v\right\}.
\end{equation}
Clearly, we have $\Psi_{\varphi}(v):= \Psi(0, -v)$ if $\text{gph}~\varPhi$ is replaced in \eqref{calm1} by the hypograph of $\varphi$.

It is well-known  that in bilevel programs with scalar objective functions, the VFCQ implies that the partial calmness condition holds in the case where the lower-level feasible set is unperturbed \cite{henrionSuroview}. Moreover, to the best of our knowledge, the VFCQ is the weakest CQ that ensures that partial calmness holds. Hence, since partial cannot be defined for \eqref{optimistic1}, due to the multiobjective nature of the objective functions in \eqref{optimistic}--\eqref{llpill}, it makes sense to consider the GVFCQ as the natural candidate for tractable CQ for the problem under consideration. For the remainder of this section, we focus our attention on constructing sufficient conditions ensuring that GVFCQ can hold.
%

We start with an extension of the uniform weak sharp minimum condition, which enables an extension of a relationship already well-known to be valid in standard bilevel optimization problems with scalar objective functions \cite{henrionSuroview,Ye1998,YeZhu1995,YeZhu1995Cor}.
\begin{defn}\label{def-w-1010} The local uniform weak sharp minimum (LUWSM) condition holds at $\left( \bar{x},\bar{y}\right)$,
 for the family problems \eqref{llpill}$_{x\in X}$, if there exist $\epsilon > 0$ and $\lambda > 0$  such that
\begin{equation*}\label{secondapp}
\forall \left( x,y\right) \in V_{\epsilon}\left( \bar{x},\bar{y}\right) : \ \ y\in Y\left( x\right)\; \Longrightarrow \; \lambda d\left( y, \,S\left( x\right) \right) \leq d\left(f\left(x, y\right);\; \varPhi\left( x\right)\right).
\end{equation*}
\end{defn}
If $V_{\epsilon}\left( \bar{x},\bar{y}\right)$ is replaced by the whole space $\mathbb{R}^{n}\times \mathbb{R}^{m}$ in this definition, we simply say that the uniformly weak sharp minimum (UWSM) condition holds at $\left( \bar{x},\bar{y}\right)$.
\begin{thm}
Let $\left( \bar{x},\bar{y}\right)\in \text{gph} S$ and $f$ be locally Lipschitzian around $\left( \bar{x},\bar{y}\right)$ with constant $L$ and assume that $\varPhi$ is Lipschitz-like around $\left( \bar{x},\bar{z}\right)$, with $\bar{z}=f\left( \bar{x},\bar{y}\right)$. If the LUWSM condition holds at $\left( \bar{x},\bar{y}\right)$, then the GVFCQ is satisfied at $\left(\bar{x},\bar{y}\right)$.
\end{thm}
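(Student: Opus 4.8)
The goal is to show that the set-valued map $\Psi$ in \eqref{calm1} is calm at $(0,0,\bar x,\bar y)$, i.e., to produce neighbourhoods and a constant $\ell>0$ with
\[
d\bigl((x,y),\Psi(0,0)\bigr)\ \le\ \ell\,\bigl\|(u,v)\bigr\|\qquad\text{for }(u,v)\text{ near }(0,0),\ (x,y)\in\Psi(u,v)\text{ near }(\bar x,\bar y).
\]
Note that $\Psi(0,0)=\{(x,y)\in\mathrm{gph}\,Y:\ f(x,y)\in\varPhi(x)\}=\mathrm{gph}\,S$ by \eqref{intro2}. So, given $(x,y)\in\mathrm{gph}\,Y$ with $\bigl(x+u,\,f(x,y)+v\bigr)\in\mathrm{gph}\,\varPhi$, I must bound $d\bigl((x,y),\mathrm{gph}\,S\bigr)$ by a constant times $\|(u,v)\|$. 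The natural strategy is to first "straighten out" the $u$-perturbation in the $x$-slot using the Lipschitz-like property of $\varPhi$, reducing to a pure value-function-type estimate, and then invoke the LUWSM condition of Definition \ref{def-w-1010} to convert the residual $d\bigl(f(x,y);\varPhi(x)\bigr)$ into a bound on $d\bigl(y,S(x)\bigr)$, which controls $d\bigl((x,y),\mathrm{gph}\,S\bigr)$.

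\textbf{Step 1: absorb the $x$-perturbation.} Suppose $(x,y)\in\mathrm{gph}\,Y$ and $v'\in\varPhi(x+u)$ with $v'=f(x,y)+v$. Since $\varPhi$ is Lipschitz-like around $(\bar x,\bar z)$ with some modulus $M$, for $(x,u)$ small we have $\varPhi(x+u)\cap W\subseteq \varPhi(x)+M\|u\|\,\mathbb B_q$ on a neighbourhood $W$ of $\bar z$; since $v'\to\bar z$ along the relevant sequences this applies, producing $\tilde v\in\varPhi(x)$ with $\|v'-\tilde v\|\le M\|u\|$. Hence
\[
d\bigl(f(x,y);\varPhi(x)\bigr)\ \le\ \|f(x,y)-\tilde v\|\ \le\ \|f(x,y)-v'\|+\|v'-\tilde v\|\ \le\ \|v\|+M\|u\|.
\]
\textbf{Step 2: apply LUWSM.} Because $y\in Y(x)$ and $(x,y)$ is near $(\bar x,\bar y)$, Definition \ref{def-w-1010} gives $\lambda\,d\bigl(y,S(x)\bigr)\le d\bigl(f(x,y);\varPhi(x)\bigr)\le \|v\|+M\|u\|$. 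Pick $\hat y\in S(x)$ (or a near-minimiser, using closedness of $S(x)$ where available, or an $\varepsilon$-argument otherwise) with $\|y-\hat y\|\le \lambda^{-1}(\|v\|+M\|u\|)+o(\|(u,v)\|)$. Then $(x,\hat y)\in\mathrm{gph}\,S=\Psi(0,0)$, so
\[
d\bigl((x,y),\Psi(0,0)\bigr)\ \le\ \|(x,y)-(x,\hat y)\|\ =\ \|y-\hat y\|\ \le\ \tfrac{1}{\lambda}\bigl(\|v\|+M\|u\|\bigr),
\]
which is of the required form with $\ell=\lambda^{-1}\max\{1,M\}$ (up to a harmless constant from the norm on $\mathbb R^{n+q}$). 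Throughout, local Lipschitzness of $f$ with constant $L$ is used to guarantee that the points $(x,\hat y)$ produced stay in the neighbourhoods where LUWSM and the Lipschitz-like estimate are valid, and that $f(x,\hat y)$ remains near $\bar z$; this is the bookkeeping that makes the "near $(\bar x,\bar y)$" and "near $(0,0)$" quantifiers consistent.

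\textbf{Main obstacle.} The delicate point is Step 1: making the Lipschitz-like inequality for $\varPhi$ genuinely applicable. The definition of Lipschitz-likeness requires the intermediate image point to lie in the prescribed neighbourhood $V$ of $\bar z$, so I need to know a priori that $v'=f(x,y)+v$ is close to $\bar z$ — which follows from continuity of $f$ at $(\bar x,\bar y)$ (hence the local Lipschitz hypothesis) together with $\|v\|$ small, but the neighbourhoods of $\bar x$, $\bar y$, $\bar z$ and of $0$ must be shrunk in the correct order. A second, related subtlety is whether $S(x)$ is nonempty and closed near $\bar x$ so that the projection $\hat y$ in Step 2 exists; if closedness of $\mathrm{gph}\,S$ is not assumed, one replaces exact projections by $\eta$-approximate ones and lets $\eta\downarrow 0$, which costs nothing in the final estimate. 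Once these neighbourhood issues are handled, the chain of inequalities in Steps 1–2 gives calmness directly, so the estimate itself is routine — the proof is essentially a careful quantifier-tracking argument built on two clean inequalities.
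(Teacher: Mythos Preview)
Your proposal is correct and follows essentially the same route as the paper: use the Lipschitz-like property of $\varPhi$ (with $x_0=x+u$, $x_1=x$) to absorb the $u$-perturbation and bound $d\bigl(f(x,y);\varPhi(x)\bigr)\le \|v\|+M\|u\|$, then invoke LUWSM to pass to $d\bigl(y,S(x)\bigr)$, and finally use the trivial inequality $d\bigl((x,y),\mathrm{gph}\,S\bigr)\le d\bigl(y,S(x)\bigr)$. One small clarification: the role of the Lipschitz constant $L$ of $f$ is precisely to ensure that $v'=f(x,y)+v$ lies in the neighbourhood of $\bar z$ where the Lipschitz-like estimate for $\varPhi$ applies (your ``Main obstacle''), not to control the location of $(x,\hat y)$ --- the latter is irrelevant once you use $d\bigl((x,y),\mathrm{gph}\,S\bigr)\le d\bigl(y,S(x)\bigr)$ directly, which avoids any need to project or track $\hat y$.
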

\begin{proof}
Based on the assumptions, there exit $l > 0$ and $\delta >0$ such that
	\begin{equation}\label{sharp-ii-1}
	\varPhi\left( x_{0}\right) \cap \left( \bar{z}+\delta \mathbb{B}\right)   \subset \varPhi\left( x_{1}\right) +l \parallel x_{0}-x_{1}\parallel \mathbb{B} \ \ \text{for all} \ x_{0}, \ x_{1}\in \bar{x}+\delta \mathbb{B}_{q}.
	\end{equation}
Let $0< \epsilon < \frac{\delta}{2}$ and $\lambda > 0$ be the constants from Definition \ref{def-w-1010} and $u \in \epsilon \mathbb{B}_{n}$, $v \in \epsilon \mathbb{B}_{q}$, and $\left( x,y\right) \in \left( \bar{x},\bar{y}\right)+\epsilon \mathbb{B}_{n\times m}$ such that $\left( x,y\right)\in\Psi\left( u,v\right)$. Since $\Psi\left( 0,0\right)=\text{gph}S$, then
\begin{equation}\label{sharp-ii}
d\left( \left( x,y\right),\Psi \left( 0,0\right)\right) =d\left( \left( x,y\right),\text{gph}S\right) \leq d\left( y,S\left( x\right) \right).
\end{equation}
By the local uniform weak sharp minimum condition, we have
	\begin{equation}\label{sharp-ii-11}
d\left( y,S\left( x\right) \right) \ \leq \ \lambda^{-1} d\left(f\left( x,y\right); \varPhi\left( x\right)\right).
    \end{equation}
Since, $f$ is locally Lipschitzian around $\left( \bar{x},\bar{y}\right)$ with constant $L$ and radius $\alpha$, then setting $\beta =\min~\left\{\alpha , \frac{\delta}{4L}\right\}$ leads to
\begin{equation*}
	\begin{array}{lcl}
		\parallel v+f\left( x,y\right) -f\left( \bar{x},\bar{y}\right) \parallel & \leq & \parallel v\parallel + L\left(  \parallel x-\bar{x}\parallel +\parallel y-\bar{y}\parallel\right), \\
		& \leq & \epsilon +L\left( \frac{\delta}{4L}+\frac{\delta}{4L}\right), \\
		& \leq & \frac{\delta}{2} +\frac{\delta}{2}, \\
		& = & \delta
	\end{array}
	\end{equation*}
for all $\left( x,y\right)\in \left( \bar{x},\bar{y}\right)+ \beta\mathbb{B}_{n\times m}$. Thus, $v+f\left( x,y\right) \in \bar{z}+\delta\mathbb{B}_{q}$. Taking $x_{0}=x+u$ and $x_{1}=x$ while considering that $x+u\in \bar{x}+\delta \mathbb{B}_{n}$, it follows from $\left( \ref{sharp-ii-1}\right)$ that there exist $z\in \varPhi\left( x\right)$ such that
\begin{equation*}
\parallel v+f\left( x,y\right)-z \parallel \leq l   \parallel u\parallel .
\end{equation*}
Consequently,
\begin{equation}\label{sharp-ii-12}
d\left(f\left( x,y\right); \varPhi\left( x\right)\right) \leq \parallel f\left( x,y\right)-z \parallel \leq l   \parallel u\parallel +\parallel v\parallel .
\end{equation}
Setting $\tau =\max\left( l,1\right)$ and combining $\left( \ref{sharp-ii}\right)$, $\left( \ref{sharp-ii-11}\right)$, and $\left( \ref{sharp-ii-12}\right)$, it follows that
\[
d\left( \left( x,y\right),\Psi \left( 0,0\right)\right) \leq \lambda^{-1}\tau \parallel \left( u,v\right) \parallel
\]
for all $\left( u,v\right) \in \epsilon \mathbb{B}_{n\times q}$ and $\left( x,y\right)\in \Psi\left( u,v\right)\cap
\left( \left( \bar{x},\bar{y}\right)+ \epsilon \mathbb{B}_{n\times m}\right)$. Hence, the result.
\end{proof}

To provide a concrete case where the LUWSM condition holds, we consider the parametric linear multiobjective optimization problem
\begin{equation}\label{linbil}
{\displaystyle \mathbb{R}_{+}^{q}-\min_{y}} \ Cy \ \ \ \text{ s.t. } \;\ Ax+By\leq d,
\end{equation}
where $d\in\mathbb{R}^{k}$, \ $C \in\mathbb{R}^{q}\times\mathbb{R}^{m}$, \ $A \in\mathbb{R}^{k}\times\mathbb{R}^{n}$ and $B \in\mathbb{R}^{k}\times\mathbb{R}^{m}$. To state the corresponding result, let $\Gamma$ denotes the simplex defined by
\begin{equation}\label{Delta}
\Gamma:=\left\{\alpha \in\mathbb{R}^{q}: \;\; \alpha\geq 0, \ \ {\displaystyle \sum^{q}_{i=1}} \ \alpha_{i}=1\right\}.
\end{equation}
\begin{prop}\label{secondrela3}Consider a family of problems \eqref{llpill}$_{x\in X}$ defined in \eqref{linbil} with $X \subseteq \mathbb{R}^n$, and let the corresponding version of the set-valued mapping $S$ \eqref{intro2} for problem \eqref{linbil} be uniformly bounded on $X$; i.e., there exits some $k>0$ such that for all  $x\in X$ and $y\in S\left( x\right)$, $\parallel y\parallel \leq k$. Furthermore, suppose that there exists a constant $\delta > 0$ such that for all $\alpha \in \Gamma$, $x\in X$, and $y\in S\left( x\right)$, we have $\alpha^{T}Cy \geq \delta$. Then the UWSM condition holds. 
\end{prop}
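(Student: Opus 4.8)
The plan is to reduce the weakly-sharp-minimum estimate to a Hoffman-type error bound for the parametric family of scalarized linear programs, and then to obtain that error bound with a constant uniform in both the parameter $x$ and the scalarization weight. For a weight $\alpha$ in the simplex $\Gamma$ of \eqref{Delta} put $c(\alpha):=C^{\top}\alpha$, let $\varphi_{\alpha}(x):=\inf\{\,c(\alpha)^{\top}y:\ Ax+By\le d\,\}$ be the optimal value of the scalarized problem, and $S_{\alpha}(x):=\arg\min\{\,c(\alpha)^{\top}y:\ Ax+By\le d\,\}$ its optimal set. The classical weighted-sum characterisation for linear (hence convex) multiobjective programs gives $S(x)=\bigcup_{\alpha\in\Gamma}S_{\alpha}(x)$ in the weakly efficient case (with $\Gamma$ replaced by its relative interior in the efficient case), and every $z\in\varPhi(x)$ satisfies $\alpha^{\top}z=\varphi_{\alpha}(x)$ for some such weight. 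With this in hand I introduce the aggregate optimality gap $g(x,y):=\inf_{\alpha\in\Gamma}\bigl(c(\alpha)^{\top}y-\varphi_{\alpha}(x)\bigr)$, which is nonnegative whenever $y\in Y(x)$ and, being the infimum over the compact set $\Gamma$ of a lower semicontinuous convex function of $\alpha$, is attained.

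The easy half is the bound $d\bigl(f(x,y);\varPhi(x)\bigr)\ge g(x,y)$ for all $x$ and all $y\in Y(x)$. Fix $z\in\varPhi(x)$ and choose $\alpha_{z}\in\Gamma$ with $\alpha_{z}^{\top}z=\varphi_{\alpha_{z}}(x)$. Since $y\in Y(x)$ gives $c(\alpha_{z})^{\top}y\ge\varphi_{\alpha_{z}}(x)$, the vector $w:=Cy-z$ satisfies $\alpha_{z}^{\top}w\ge 0$, so, using $\|\alpha_{z}\|\le\|\alpha_{z}\|_{1}=1$,
\[
\|f(x,y)-z\|\ =\ \|w\|\ \ge\ \|\alpha_{z}\|^{-1}\,\alpha_{z}^{\top}w\ \ge\ \alpha_{z}^{\top}w\ =\ c(\alpha_{z})^{\top}y-\varphi_{\alpha_{z}}(x)\ \ge\ g(x,y).
\]
Taking the infimum over $z\in\varPhi(x)$ proves the bound (which is vacuous when $\varPhi(x)=\emptyset$).

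The substantive half is to produce a constant $\gamma>0$, independent of $x$, with $d\bigl(y,S(x)\bigr)\le\gamma\,g(x,y)$ for all $y\in Y(x)$; combined with the previous step this gives UWSM with $\lambda=\gamma^{-1}$. Let $\bar\alpha$ attain $g(x,y)$. Linear programming duality supplies $\mu\ge 0$ with $B^{\top}\mu=-c(\bar\alpha)$ and $\varphi_{\bar\alpha}(x)=-\mu^{\top}(d-Ax)$; writing $J:=\{i:\mu_{i}>0\}$, complementary slackness identifies the optimal set with the face $F_{J}(x):=\{\,y':\ B_{i}y'=d_{i}-A_{i}x\ (i\in J),\ B_{i}y'\le d_{i}-A_{i}x\ (i\notin J)\,\}$ (here $A_{i},B_{i}$ denote $i$-th rows) and yields
\[
g(x,y)\ =\ c(\bar\alpha)^{\top}y-\varphi_{\bar\alpha}(x)\ =\ \sum_{i\in J}\mu_{i}\bigl(d_{i}-A_{i}x-B_{i}y\bigr)\ \ge\ \Bigl(\min_{i\in J}\mu_{i}\Bigr)\sum_{i\in J}\bigl(d_{i}-A_{i}x-B_{i}y\bigr).
\]
Because $y\in Y(x)$, the only constraints of $F_{J}(x)$ that $y$ can violate are the equalities, with residual $d_{i}-A_{i}x-B_{i}y\ge 0$; since the coefficient matrix of the system defining $F_{J}(x)$ depends on $B$ and $J$ but not on $x$, Hoffman's lemma gives a constant $H_{J}$ with $d(y,F_{J}(x))\le H_{J}\sum_{i\in J}(d_{i}-A_{i}x-B_{i}y)$. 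As $S_{\bar\alpha}(x)=F_{J}(x)\subseteq S(x)$, the two displays combine to $d(y,S(x))\le\bigl(H_{J}/\min_{i\in J}\mu_{i}\bigr)g(x,y)$. Only finitely many index sets $J$ can occur, so it remains to bound $\max_{J}H_{J}$ (immediate) and, above all, to keep $\min_{i\in J}\mu_{i}$ bounded away from $0$ uniformly over the admissible pairs $(\bar\alpha,x)$ — this is the main obstacle, and it is exactly here that the two hypotheses are used: the uniform boundedness of $S$ forces every such face $F_{J}(x)=S_{\bar\alpha}(x)$ to be a bounded polytope with no recession direction, while the uniform positivity $\alpha^{\top}Cy\ge\delta$ on $S(x)$ keeps the scalarized optimal values $\varphi_{\alpha}(x)=-\mu^{\top}(d-Ax)$ bounded away from $0$; together with the compactness of $\Gamma$ these rule out the multiplier degeneracies that would otherwise drive $\min_{i\in J}\mu_{i}$ to $0$, producing a finite $\gamma$ and completing the proof.

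Two points lie on the periphery. In the efficient (rather than weakly efficient) case one restricts the weights to the relative interior of $\Gamma$; every efficient face of a linear program is exposed by some strictly positive weight, so one runs the same scheme over the finitely many efficient faces, each with a fixed strictly positive exposing weight, which also bypasses the question of whether the infimum defining $g$ is attained. And the genuinely degenerate configurations — $\varPhi(x)=\emptyset$ (equivalently $S(x)=\emptyset$), or $c(\alpha)=0$ for some $\alpha\in\Gamma$, which forces $S(x)=Y(x)$, itself bounded by hypothesis — make the asserted inequality trivial and may be disposed of at the outset.
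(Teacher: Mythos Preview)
Your argument has a genuine gap at exactly the point you flag as ``the main obstacle''. You need $\min_{i\in J}\mu_i$ bounded away from $0$ uniformly in $(x,\bar\alpha)$, and you assert that primal boundedness of $S_{\bar\alpha}(x)$ together with $\varphi_{\alpha}(x)\ge\delta$ and compactness of $\Gamma$ ``rule out the multiplier degeneracies''. This is not established, and the mechanism is not even sketched. In simple configurations satisfying both hypotheses one can exhibit weights with arbitrarily small positive multipliers: take $C=I_2$, $Y(x)\equiv[1,2]^2$ (so $k$ and $\delta=1$ exist), and $\alpha=(1-\epsilon,\epsilon)$; the unique dual optimum is $\mu=(0,1-\epsilon,0,\epsilon)$ with $J=\{2,4\}$ and $\min_{i\in J}\mu_i=\epsilon\to 0$. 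Whether your particular $\bar\alpha$, chosen to minimise $g(x,y)$, always steers clear of such degeneracies is the whole question, and it is neither obvious nor argued. Passing through the face $F_J$ forces you to compare a $\mu$-weighted sum of slacks against an unweighted one, and neither primal boundedness nor a lower bound on the optimal value controls individual dual components.

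The paper sidesteps this entirely by applying a Hoffman-type bound not to the dual-defined face $F_J(x)$ but directly to the level-set description $S_{\alpha}(x)=\{y\in Y(x):\ \alpha^{\top}Cy\le \alpha^{\top}C\tilde y\}$, in which a feasible $y$ violates a single scalar inequality with residual $\alpha^{\top}(Cy-C\tilde y)$. The cited error-bound result of Hu--Wang then furnishes the constant explicitly as $k\delta^{-1}$ from the two hypotheses, after which $\alpha^{\top}(Cy-C\tilde y)\le\|\alpha\|_1\,\|Cy-C\tilde y\|=\|Cy-z\|$ and an infimum over $z\in\varPhi(x)$ finish the estimate. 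Your first inequality $d(f(x,y);\varPhi(x))\ge g(x,y)$ is correct and in the same spirit; the repair is to bound $d(y,S(x))$ through the single scalarized level-set constraint rather than through its dual description.
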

\begin{proof}
Let $x\in X$ and consider the family of sets
\begin{equation}\label{rela0}
S_{\alpha}\left( x\right) := \arg\underset{y}\min \left\{\alpha^{T}Cy: \;\; Ax+By\leq d \right\}\;  \mbox{ for } \; \alpha\in\Gamma.
\end{equation}
Given that the set-valued mapping $G\left( x\right) =\{y\in\mathbb{R}^{m} \ : \ Ax+By\leq d  \}$ is a polyhedral and convex-valued, it follows from \cite{ABB} (see also \cite[Theorem 3.3, pp 96]{luc}) that there are finitely many vectors $\alpha_{1}\left(x\right)$,\ldots, $\alpha_{s}\left(x\right)$ of the set $\Gamma$ \eqref{Delta} such that we have
\begin{equation}\label{rela}
S\left( x\right) \ = \ {\displaystyle \bigcup^{s}_{j=1}} \ S_{\alpha_{j}\left( x\right) }\left( x\right).
\end{equation}
Let $y\in G\left( x\right)$. If $y\in S\left( x\right)$, then $\left( \ref{secondapp}\right)$ holds true. Otherwise, considering any $y\in G\left( x\right)\setminus S\left( x\right)$, we have $0\notin Cy -\varPhi \left( x\right)$. Now, let $z\in   \varPhi \left( x\right)$, then there is some $\tilde{y}\in S\left( x\right)$ such that
$z=C\tilde{y}$ and $Cy -C\tilde{y}\neq 0$.
On the other side, setting $a=\alpha_{j}^{T}\left( x\right)C$ and $b=\alpha_{j}^{T}\left( x\right)C\tilde{y}$ and using Hoffman's lemma (see \cite[Theorem 1]{hoffman1}) it follows from $\left( \ref{rela0}\right)$ and $\left( \ref{rela}\right)$
\[
d\left( y,S\left( x\right) \right)  \leq  d\left( y,S_{\alpha_{j}}\left( x\right) \right) \leq  k \delta^{-1} \alpha_{j}^{T}\left( x\right) \left( Cy -C\tilde{y}\right),
\]
where $k$ is the constant appearing in uniform boundedness of $S$. Hence,
\[
\begin{array}{lcl}
d\left( y,S\left( x\right) \right) & \leq &  k \delta^{-1} \ \parallel \alpha_{j}\left( x\right)\parallel_{1} \ \parallel Cy - C\tilde{y} \parallel, \\
& \leq & \lambda^{-1}  \ \parallel Cy - C\tilde{y}  \parallel,
\end{array}
\]
where $\lambda^{-1} =k \delta^{-1}$ and $\parallel \alpha_{j}\left( x\right)\parallel_{1}=1$. It follows from the last inequality that
\[
Cy - C\tilde{y} \notin \lambda d\left( y,S\left( x\right)  \right)\mathbb{U}_{q}.
\]

Finally, since $z=C\tilde{y}$ is arbitrary, we get
$
Cy -\varPhi\left( x\right) \cap \lambda d\left( y,S\left( x\right) \right)\mathbb{U}_{q} =\emptyset.
$
This means that for all $z\in \varPhi\left( x\right)$,
$
Cy - z \notin \lambda d\left( y,S\left( x\right) \right) \mathbb{U}_{q}.
$
Consequently,
$
\lambda d\left( y,S\left( x\right) \right)  \leq  \parallel Cy - z\parallel
$
for all $z\in \varPhi\left( x\right)$. This implies that
$
\lambda d\left( y, S\left( x\right) \right)  \leq  d\left(Cy, \varPhi\left( x\right)\right).
$
Hence, the result.
\end{proof}

Next, we provide an example where all the assumptions of proposition \ref{secondrela3} are satisfied. 
\begin{exmp}\label{FirstExample}
Setting $X:=[4, \;\infty)\times [3, \infty)$ and considering problem \eqref{linbil} with
\begin{equation}\label{MaterialExample}
C:=\left(\begin{array}{cc}
           2 & 0 \\
           0 & 1
         \end{array}
 \right),
 \quad
 A:=\left(\begin{array}{rr}
           0 & 0 \\
           0 & 0\\
           0 & 0\\
           0 & 0\\
           -1& 0\\
           0 & -1
         \end{array}\right),
 \quad
 B:=\left(\begin{array}{rr}
           1 & 0 \\
           -1 & 0\\
           0 & 2\\
           0 & -1\\
           1& 0\\
           0 & 1
         \end{array}\right),
 \quad \mbox{and }\quad
d:=\left(\begin{array}{r}
          4\\
          -1\\
          6\\
          -2\\
          0\\
          0
         \end{array}\right),
\end{equation}
we can easily check that for any $x\in X$ and $y$ such that $Ax + By \leq d$, taking any $(\mu, \nu)\in \mathbb{R}^2_+$ such that $\mu + \nu =1$, we have the inequality
\[
(\mu, \nu)C y = 2\mu y_1 + \nu y_2\geq 2.
\]
\end{exmp}

In case the uniform boundedness of the set-valued mapping $S$ required in Proposition \ref{secondrela3} is not satisfied, we can use the following alternative result. 
\begin{prop}\label{firstapp11}
Consider a family of problems \eqref{llpill}$_{x\in X}$ defined in \eqref{linbil} with $X \subseteq \mathbb{R}^n$ such that for all $x\in X$ and  $j\in \{1,\cdots ,s\}$, the sets $S_{\alpha_{j}}\left( x\right)$ from  \eqref{rela} are unbounded. Furthermore, suppose that there exist $\delta > 0$  and a unit vector $z\in\mathbb{R}^{q}$ such that for all a constant $\alpha \in \Gamma$ and  $x\in X$, we have  $\alpha^{T}Cz\geq \delta > 0$. Then the UWSM condition holds. 
\end{prop}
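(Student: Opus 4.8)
The plan is to mimic the proof of Proposition~\ref{secondrela3}, the uniform boundedness of $S$ being replaced by the unboundedness of the pieces $S_{\alpha_{j}}(x)$ only at the single point where it matters, namely the production of an error bound whose constant does not depend on $x$. Fix $x\in X$ and recall from \eqref{rela} that $S(x)=\bigcup_{j=1}^{s}S_{\alpha_{j}(x)}(x)$, where each $S_{\alpha_{j}(x)}(x)$ is the optimal set of the scalarized linear program $\min\{\alpha_{j}(x)^{T}Cy:\;Ax+By\leq d\}$, with optimal value $v_{j}(x):=\min\{\alpha_{j}(x)^{T}Cy':\;y'\in G(x)\}$ and $G(x):=\{y:\;Ax+By\leq d\}$. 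Let $y\in G(x)$. If $y\in S(x)$, the inequality in \eqref{secondapp} is immediate, so assume $y\in G(x)\setminus S(x)$; pick any $\bar z\in\varPhi(x)$, write $\bar z=C\tilde y$ with $\tilde y\in S(x)$, and choose $j$ with $\tilde y\in S_{\alpha_{j}(x)}(x)$, so that $v_{j}(x)=\alpha_{j}(x)^{T}C\tilde y\leq\alpha_{j}(x)^{T}Cy$.

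The core of the argument is the following error bound, in which the constant is independent of $x$ and of $j$: there is $\mu>0$ such that, for all $x\in X$, all $j$, and all $y\in G(x)$, one has $d\bigl(y,S_{\alpha_{j}(x)}(x)\bigr)\leq\mu\bigl(\alpha_{j}(x)^{T}Cy-v_{j}(x)\bigr)$. Granting this, the proof is concluded exactly as in Proposition~\ref{secondrela3}: since $\|\alpha_{j}(x)\|_{1}=1$, we get $d(y,S(x))\leq d\bigl(y,S_{\alpha_{j}(x)}(x)\bigr)\leq\mu\,\alpha_{j}(x)^{T}(Cy-C\tilde y)\leq\mu\|Cy-\bar z\|$; as $\bar z$ ranges over $\varPhi(x)$ this yields $\lambda\,d(y,S(x))\leq d(Cy,\varPhi(x))$ with $\lambda:=\mu^{-1}$, that is, the UWSM condition.

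It remains to produce the uniform constant $\mu$, and this is where both hypotheses are used. For fixed data, Hoffman's lemma applied to the enlarged linear system $By'\leq d-Ax$, $\alpha_{j}(x)^{T}Cy'\leq v_{j}(x)$ already gives such a bound, but with a constant attached to the matrix formed by appending the row $\alpha_{j}(x)^{T}C$ to $B$; since $\alpha_{j}(x)$ may run over infinitely many points of $\Gamma$ as $x$ varies over $X$, uniformity does not follow from finiteness alone. To obtain it, given $y\in G(x)$ with $\alpha_{j}(x)^{T}Cy>v_{j}(x)$, I would move from $y$ in the direction $-z$: because $\alpha^{T}Cz\geq\delta>0$ for every $\alpha\in\Gamma$, the quantity $\alpha_{j}(x)^{T}C(y-tz)$ decreases at rate at least $\delta$, so the half-space $\{y':\;\alpha_{j}(x)^{T}Cy'\leq v_{j}(x)\}$ is reached after a step of length at most $\delta^{-1}\bigl(\alpha_{j}(x)^{T}Cy-v_{j}(x)\bigr)$; one then restores membership in $G(x)$ by a further move, invoking the error bound for $G(x)$ alone (whose Hoffman constant depends only on $B$ and is therefore uniform) together with a unit recession direction of the unbounded piece $S_{\alpha_{j}(x)}(x)$, which belongs to $\{w:\;Bw\leq 0,\ \alpha_{j}(x)^{T}Cw=0\}$ and hence repairs feasibility without disturbing the inequality $\alpha_{j}(x)^{T}Cy'\leq v_{j}(x)$. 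The step I expect to be the main obstacle is exactly this balancing: arranging the two corrections so that the total displacement remains $O\bigl(\delta^{-1}(\alpha_{j}(x)^{T}Cy-v_{j}(x))\bigr)$, and checking that every constant involved can be chosen independently of $x\in X$ and of $\alpha_{j}(x)\in\Gamma$ — which I expect to settle by reducing, via the finiteness of the activity patterns of $Ax+By\leq d$ and the compactness of $\Gamma$, to a continuity argument, with $\mu$ in the end a fixed multiple of $\delta^{-1}$.
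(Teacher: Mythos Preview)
Your overall plan matches the paper's: follow the proof of Proposition~\ref{secondrela3} and swap out only the error-bound step. The paper's own proof is in fact a one-liner --- it simply invokes \cite[Theorem~2]{hoffman1} in place of \cite[Theorem~1]{hoffman1}, so the uniform inequality $d\bigl(y,S_{\alpha_j(x)}(x)\bigr)\le \mu\bigl(\alpha_j(x)^{T}Cy-v_j(x)\bigr)$ with $\mu$ independent of $x$ and $j$ is pulled directly from that reference. The hypotheses here (unboundedness of each $S_{\alpha_j}(x)$ and a unit vector $z$ with $\alpha^{T}Cz\ge\delta$ for all $\alpha\in\Gamma$) are tailored to match the assumptions of that second theorem, just as the boundedness and the lower bound on $\alpha^{T}Cy$ in Proposition~\ref{secondrela3} were tailored to the first.

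Your attempt to manufacture the bound by hand --- step along $-z$ to reach the optimal half-space, then restore membership in $G(x)$ --- is a reasonable heuristic, but the ``balancing'' you yourself flag is a genuine gap, not merely a detail. Projecting $y-t^{*}z$ back onto $G(x)$ via the Hoffman constant of $B$ can re-violate $\alpha_j(x)^{T}Cy'\le v_j(x)$ by an amount of order $\|C\|\,C_B\,\|(Bz)_+\|\,t^{*}$, which need not be small relative to the original violation $\delta\,t^{*}$; and a recession direction of $S_{\alpha_j(x)}(x)$ is only useful once you are already inside $G(x)$, so it does not repair this overshoot. The compactness-of-$\Gamma$ argument you gesture at would amount to bounding the Hoffman constant of the augmented matrix with rows $B$ and $\alpha^{T}C$ uniformly in $\alpha$, which is essentially the content of the cited result. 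The cleanest fix is to cite \cite[Theorem~2]{hoffman1} as the paper does.
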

\begin{proof}
Its folows on the path of  Proposition \ref{secondrela3}. We shall argue in the same way as above and use \cite[Theorem 2]{hoffman1} instead of \cite[Theorem 1]{hoffman1}. 
\end{proof}

The next result provides a sufficient condition for the existence of uniform weak sharp minimun tailored to a more general multiobjective bilevel optimization problem.


\begin{thm}\label{suff-ye} The UWSM condition holds for any general family of problems \eqref{llpill}$_{x\in X}$, where $f$ is Lipschitz continuous in $y$ uniformly in $x\in X$, the set $Y\left( x\right)$ is closed for any fixed $x\in X$, and there exists a strictly positive number $\lambda$ such that we have
\[
\begin{array}{ll}
  \parallel \varsigma \parallel \geq \lambda^{-1}, & \forall \varsigma \in \partial_{y} \left\langle y^{\ast},f \right\rangle \left( x,y\right)  +N\left( y,Y\left( x\right)\right),\\[1ex]
                                                   & \forall y^{\ast}\in N\left( f\left( x,y\right), \; z-\mathbb{R}_{+}^{q}\right), \;\; z\in \varPhi\left( x\right), \;\; \left( x,y\right) \in \text{gph} \ Y, \;\; y\notin S\left( x\right).
\end{array}
\]
\end{thm}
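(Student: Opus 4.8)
The plan is to follow Ye's classical route to uniform weak sharp minima for scalar bilevel programs, substituting the value-function gap $f(x,y)-\varphi(x)$ by the distance $d(f(x,y);\varPhi(x))$ and the linear scalarization of the lower-level objective by a convex, Lipschitzian distance-type scalarization based at a frontier point. Fix $(x,y)\in\text{gph}\,Y$. If $y\in S(x)$ or $\varPhi(x)=\emptyset$, the inequality in Definition~\ref{def-w-1010} (with $V_\epsilon$ replaced by the whole space) is trivial, so I assume $y\notin S(x)$ and $r:=d(f(x,y);\varPhi(x))<\infty$; since $\varPhi(x)\subseteq f(x,Y(x))$, nonemptiness of $\varPhi(x)$ forces $S(x)\neq\emptyset$. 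I argue by contradiction: if the UWSM condition failed with constant $(2\lambda)^{-1}$, there would be such a pair with $d(y,S(x))>2\lambda r$. I then pick $\eta>0$ with $d(y,S(x))>2\lambda(r+\eta)$, choose $z\in\varPhi(x)$ with $\|f(x,y)-z\|\le r+\eta$, and write $z=f(x,\bar y)$ with $\bar y\in S(x)$.

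The core device is the scalarizing function $g_z(y'):=d(f(x,y');\,z-\mathbb{R}^q_+)$, which is Lipschitz in $y'$ because $f$ is Lipschitz in $y$ uniformly in $x$ and the distance to a convex set is nonexpansive. A short argument using (weak) efficiency of $z$ gives the inclusion $\{y'\in Y(x):g_z(y')=0\}=\{y'\in Y(x):f(x,y')\preceq_{\mathbb{R}^q_+}z\}\subseteq S(x)$ (if $f(x,y')\preceq z$ and $z$ is (weakly) efficient, then $f(x,y')$ is itself a (weakly) efficient value). In particular $\bar y$ belongs to this set, so $\theta_z:=g_z+\delta_{Y(x)}$, where $\delta_{Y(x)}$ is the indicator of the closed set $Y(x)$, is proper, lower semicontinuous, bounded below by $0$, with $\inf\theta_z=0$, $\theta_z(y)=g_z(y)\le\|f(x,y)-z\|\le r+\eta$, and $\theta_z(y)=g_z(y)>0$ because $y\notin S(x)$. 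Applying Ekeland's variational principle at $y$ with tolerance $\varepsilon:=r+\eta$ and radius $2\lambda(r+\eta)$ yields $\hat y$ with $\theta_z(\hat y)\le\theta_z(y)$, $\|\hat y-y\|\le 2\lambda(r+\eta)$, and $\hat y$ a global minimizer of $y'\mapsto\theta_z(y')+(2\lambda)^{-1}\|y'-\hat y\|$. From $\|\hat y-y\|\le 2\lambda(r+\eta)<d(y,S(x))$ I get $\hat y\in Y(x)\setminus S(x)$, hence $g_z(\hat y)>0$ (a zero value would place $\hat y$ in $\{g_z=0\}\cap Y(x)\subseteq S(x)$).

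It remains to read off a subgradient at $\hat y$. Fermat's rule for $y'\mapsto\theta_z(y')+(2\lambda)^{-1}\|y'-\hat y\|$ at $\hat y$, together with the subdifferential sum rule ($g_z$ Lipschitz, $Y(x)$ closed) and the chain/scalarization rule for $g_z=\varrho\circ f(x,\cdot)$ with the convex Lipschitz outer function $\varrho(\cdot):=d(\cdot;z-\mathbb{R}^q_+)$, produces $\varsigma\in\partial_y\langle y^{\ast},f\rangle(x,\hat y)+N(\hat y;Y(x))$ with $\|\varsigma\|\le(2\lambda)^{-1}$, where $y^{\ast}\in\partial\varrho(f(x,\hat y))$. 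Since $g_z(\hat y)>0$ means $f(x,\hat y)$ lies off the convex set $z-\mathbb{R}^q_+$, the set $\partial\varrho(f(x,\hat y))$ is the single unit vector $(f(x,\hat y)-P)/\|f(x,\hat y)-P\|$, with $P$ the metric projection of $f(x,\hat y)$ onto $z-\mathbb{R}^q_+$; this vector belongs to $N(P;z-\mathbb{R}^q_+)\cap\mathbb{R}^q_+$, and $P\in\varPhi(x)-\mathbb{R}^q_+$. Thus $\varsigma$ has precisely the form of the vectors quantified in the statement, for the feasible pair $(x,\hat y)$ with $\hat y\notin S(x)$; the hypothesis then forces $\|\varsigma\|\ge\lambda^{-1}$, contradicting $\|\varsigma\|\le(2\lambda)^{-1}<\lambda^{-1}$. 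Hence the UWSM condition holds (with constant $(2\lambda)^{-1}$).

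The hard part, I expect, will be the bookkeeping in the last paragraph: making the multiplier $y^{\ast}$ appear exactly in the normal cone written in the hypothesis --- that is, aligning the subgradient of the distance-type scalarization with the right frontier point and the right direction --- and verifying that the limiting-subdifferential sum and chain rules apply here without an additional qualification, which is precisely why convexity of $\varrho$ and (local) Lipschitz continuity of $f$ are invoked. Everything else --- the choice of constants feeding Ekeland's principle and the final contradiction --- is routine once this identification is in hand.
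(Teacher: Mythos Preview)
Your proposal is correct and follows essentially the same route as the paper: apply Ekeland's variational principle to $d\bigl(f(x,\cdot);\,z-\mathbb{R}^q_+\bigr)+\delta_{Y(x)}$ for a fixed $z\in\varPhi(x)$, then invoke the sum and chain rules to produce a multiplier $y^{\ast}$ and a vector $\varsigma$ of small norm that contradicts the standing hypothesis. The only cosmetic differences are that the paper uses the sup-norm distance $\max_i (f_i-z_i)_+$ (which streamlines the ``bookkeeping'' step you rightly flag), first isolates the Ekeland argument as an abstract error-bound lemma before specializing to $\phi=f(x,\cdot)$, $\Lambda=Y(x)$, and recovers the constant $\lambda$ rather than your $(2\lambda)^{-1}$.
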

\begin{proof}
Consider any closed subset $\Lambda$ of $\mathbb{R}^{m}$, a locally Lipschitz function $\phi :\mathbb{R}^{m}\rightarrow \mathbb{R}^{q}$ with constant $L$,  a vector $z\in\mathbb{R}^{q}$, and the set
\begin{equation*}\label{ass1}
\Xi\left( \phi ,z\right) =\left\{y\in \Lambda \ : \ \phi\left( y\right) \leq z\right\}
\end{equation*}
and the function
\begin{equation*}\label{ass2}
\phi_{z}^{+}\left( y\right) = d\left( \phi\left( y\right) ,z-\mathbb{R}_{+}^{q}\right) = {\displaystyle \max^{q}_{i=1}} \ \left(\phi_{i}\left( y\right) -z_{i}\right)_{+} ,
\end{equation*}
where the distance function is defined by the max norm on $\mathbb{R}^{q}$ and $a_{+}=\max \{a,0\}$. Now, let us show that if there exist $\lambda > 0$ and $0< \epsilon \leq +\infty$   such that
\begin{equation}\label{suf-eq1}
\parallel \varsigma \parallel \geq \lambda^{-1}
\end{equation}
for all  $\varsigma \in \partial \left\langle y^{\ast},\phi \right\rangle \left( y\right) +N\left( y, \Lambda\right)$,
$y^{\ast}\in N\left( \phi\left( y\right),z-\mathbb{R}_{+}^{q}\right)$,
$y\in \Lambda$, and $0< \phi_{i}\left( y\right)-z_{i} < \epsilon$ for some $i$, then we have
\begin{equation}\label{suf-eq2}
d\left( y,\Xi\left( \phi ,z\right)\right)\leq \lambda  \phi_{z}^{+}\left( y\right), \ \ \ \forall y\in \Lambda \ \ \text{such that} \ \ \phi_{z}^{+}\left( y\right) < \epsilon\left( 1+L\lambda\right)^{-1} .
\end{equation}

First, by contraposition, suppose that there exist $\bar{y}\in \Lambda$ such that
\begin{equation}\label{absurd}
\lambda \phi_{z}^{+}\left( \bar{y}\right)  < d\left( \bar{y},\Xi\left( \phi ,z\right)\right) \ \ \ \text{and} \ \ \ \phi_{z}^{+}\left( \bar{y}\right) < \epsilon\left( 1+L\lambda\right)^{-1}.
\end{equation}
It is obvious, by choosing suitable $r> 1$, that the following inequalities hold
\begin{equation*}
\delta < d\left( \bar{y},\Xi\left( \phi ,z\right)\right) \ \ \ \text{and} \ \ \ \phi_{z}^{+}\left( \bar{y}\right) < \epsilon\left( 1+rL\lambda\right)^{-1}
\end{equation*}
with $\delta =r \lambda \phi_{z}^{+}\left( \bar{y}\right)$. Now, observing that
\begin{equation*}
\phi_{z}^{+}\left( \bar{y}\right) \leq {\displaystyle \inf_{y\in \Lambda}} \ \phi_{z}^{+}\left( y\right)+ \delta \left( r\lambda\right)^{-1},
\end{equation*}
one can deduce that
\begin{equation*}
\psi\left( \bar{y}\right) \leq {\displaystyle \inf_{y\in \Lambda}} \ \psi\left( y\right)+ \epsilon
\end{equation*}
with $\psi\left( y\right) = \phi_{z}^{+}\left( y\right)+\delta_{\Lambda}\left( y\right) $, $\delta_{\Lambda}$ is the indicator function of the set $\Lambda$ and $\epsilon =\delta \left( r\lambda\right)^{-1}$. Hence, applying the variational principle of Ekeland we find $v\in\Lambda$ such that
\begin{equation}\label{ekland}
\left\{
\begin{array}{l}
\parallel v-\bar{y} \parallel \leq \delta ,\\
\psi\left( v\right)\leq \psi\left( y\right)+ \left( r\lambda\right)^{-1} \parallel y-v \parallel \ \ \ \text{for all} \ \ y\in\Lambda .
\end{array}
\right.
\end{equation}
Hence, $v$ is a minimum of the function $y \longmapsto \psi\left( y\right) +\left( r\lambda\right)^{-1} \parallel y-v \parallel$ and we get, by exploiting the chain rule, that
\begin{equation}\label{evp}
0\in \partial \phi_{z}^{+}\left( v\right)+ N\left( v, \Lambda\right)  +\left( r\lambda\right)^{-1} \mathbb{B}_{m}.
\end{equation}
In view of \cite[Theorem 1.97 and Corrolary 3.43]{mordu2} it follows that
\begin{equation*}
\partial \phi_{z}^{+}\left( v\right) \ \subset \ {\displaystyle \bigcup_{y^{\ast}\in N\left( \phi\left( v\right) ,z-\mathbb{R}_{+}^{q}\right) }} \ \partial \left\langle y^{\ast},\phi \right\rangle \left( v\right) .
\end{equation*}
Consequently, there exist $y^{\ast}\in N\left( \phi\left( v\right),z-\mathbb{R}_{+}^{q}\right)$ and $\varsigma \in \partial \left\langle y^{\ast},\phi \right\rangle \left( v\right)+ N\left( v,\Lambda\right)$ such that $\left( \ref{evp}\right) $ yields
\begin{equation*}
\parallel \varsigma\parallel \leq \left( r\lambda\right)^{-1} < \lambda^{-1}.
\end{equation*}
According to $\left( \ref{absurd}\right)$, $\left( \ref{ekland}\right)$ and $v\in \Lambda$, we have $v\notin  \Xi\left(\phi ,z \right)$. Consequently, $0< \phi_{i}\left( v\right)-z_{i} < \epsilon$ for some $i$. On the other hand, since $\parallel v-\bar{y} \parallel \leq \delta$, the condition $\left( \ref{absurd}\right)$ guarantees that
\begin{equation*}
\begin{array}{lcl}
\phi_{z}^{+}\left( v\right) & \leq & \phi_{z}^{+}\left( \bar{y}\right) + L \parallel v-\bar{y} \parallel, \\
& \leq & \phi_{z}^{+}\left( \bar{y}\right) + L \delta, \\
& \leq & \phi_{z}^{+}\left( \bar{y}\right)\left( 1+Lr\lambda\right), \\
& \leq & \epsilon \left( 1+Lr\lambda\right)^{-1}\left( 1+Lr\lambda\right), \\
& \leq & \epsilon.
\end{array}
\end{equation*}
Since $\phi_{i}\left( v\right)-z_{i} \leq \phi_{z}^{+}\left( v\right)$, we deduce that $\parallel \varsigma\parallel \leq \left( r\lambda\right)^{-1} < \lambda^{-1}$ and $\phi_{i}\left( y\right)-z_{i} \leq  \epsilon$, which contradict $\left( \ref{suf-eq1}\right) $ and justifies the required inclusion $\left( \ref{suf-eq2}\right) $.

Secondly, taking $\phi \left( y\right) =f\left( x,y\right)$, \ $\Lambda =Y\left( x\right)$, $z\in \varPhi\left( x\right)$, and observing that
\begin{equation*}
\Xi\left( \phi,z\right)=\{y\in Y\left( x\right) \ : \ f\left( x,y\right) \leq  z \} \ \subset \ S\left( x\right),
\end{equation*}
it holds that
\begin{equation*}
\begin{array}{lcll}
d\left( y,S\left( x\right) \right) & \leq & d\left( y,\Xi\left( z,f\right)\right),  &  \\
& \leq & \lambda d\left( f\left( x,y\right) , z-\mathbb{R}_{+}^{q}\right), & \\
& \leq & \lambda d\left( f\left( x,y\right) , z\right).
\end{array}
\end{equation*}
Since, $z$ is arbitrary in $\varPhi\left( x\right)$, then
$d\left( y,S\left( x\right) \right)  \leq  \lambda  d\left( f\left(x, y\right) , \varPhi\left( x\right) \right).$
\end{proof}
\begin{figure}[htp]\label{Figure1}
  \begin{center}
  	\begin{tikzpicture}[sharp corners=2pt,inner sep=5pt,node distance=.6cm,every text node part/.style={align=center}]
  	
  	\node[draw, minimum height = 1cm, minimum width = 4cm] (state0){Linear CQ
  	};
  	\node[draw,right=2cm of state0, minimum height = 1cm, minimum width = 4cm](state1){NonLinear CQ
  	};

  	\node[draw,below=2cm of state0, minimum height = 1cm, minimum width = 4cm](state2){UWSM};
  	
    \node[draw,below=2cm of state1, minimum height = 1cm, minimum width = 4cm](state3){GVFCQ
    };
  	
  	\node[draw,below=2cm of state2, minimum height = 1cm, minimum width = 4cm](state4){LUWSM};
  	
  	\node[draw,right=2cm of state4, minimum height = 1cm, minimum width = 4cm](state5){R-regularity
  	};
  	
  	
  	\draw[-triangle 60] (state0) -- (state2) node [midway, above, left = 0.1cm]{};
  	\draw[-triangle 60] (state5) -- (state4) node [midway, above]{};
  	\draw[-triangle 60] (state2) -- (state4) node [midway, above]{};
  	\draw[-triangle 60] (state4) -- (state3) node [midway, above]{};
  	\draw[-triangle 60] (state1) -- (state2) node [midway, above, rotate = -45]{};
  	\end{tikzpicture}
  	  \end{center}
\caption{
	Linear CQ refers to the assumptions in Proposition \ref{secondrela3} or Proposition \ref{firstapp11}, while NonLinear CQ represents the assumptions in Theorem \ref{suff-ye}.}
\label{Figure1}
\end{figure}
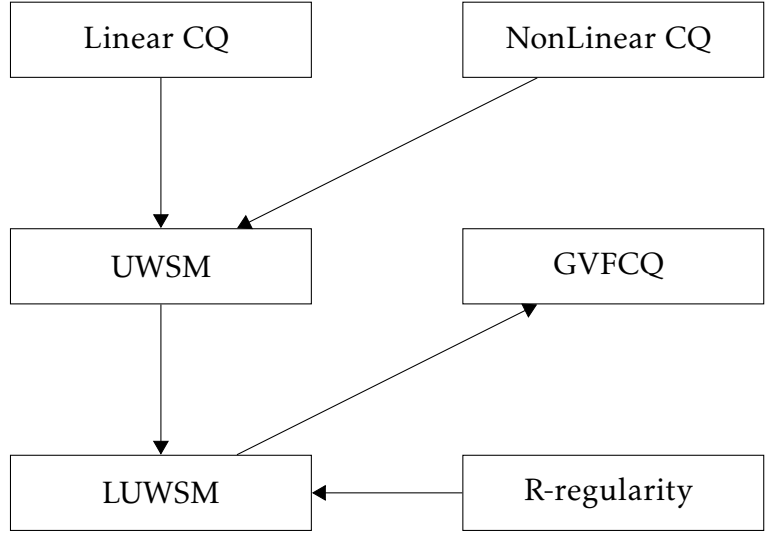

To conclude this section, we provide another sufficient condition for the LUWSM condition based on the R-regularity concept introduced in Subsection \ref{Tools from var}. To proceed, observe that the lower-level efficient solution mapping $S$ \eqref{intro2} can be rewritten as
\[
S(x)=\left\{y\in \mathbb{R}^m:\;\;   d\left( f\left( x,y\right) ,\varPhi\left( x\right)\right)\leq 0, \;\; d\left( \left( x,y\right),\text{gph} \ Y\right)\leq 0\right\}.
\]
Hence, we will say that the R-regularity constraint qualification (RRCQ) holds at the point $\left( \bar{x}, \bar{y}\right)\in \text{gph} \ S$ if $S$ is R-regular \eqref{R-regularityMap} at $\left( \bar{x}, \bar{y}\right)$ w.r.t. $\text{dom} \ S$.
\begin{prop}
If RRCQ holds at $\left( \bar{x}, \bar{y}\right)$ and there is some neighborhood $U\subset\mathbb{R}^{n}$ of $\bar{x}$ such that $\text{dom} \ Y\cap U=  \text{dom} \ S\cap U$, then LUWSM is satisfied at $\left( \bar{x}, \bar{y}\right)$.
\end{prop}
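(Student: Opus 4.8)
The plan is to unwind the R-regularity inequality for $S$ furnished by RRCQ and substitute it directly into the definition of LUWSM, using the hypothesis $\text{dom}\,Y\cap U=\text{dom}\,S\cap U$ only to guarantee that this inequality is actually applicable at the feasible points appearing in the LUWSM statement.

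First I would make RRCQ explicit. Setting $h_{1}(x,y):=d(f(x,y),\varPhi(x))$ and $h_{2}(x,y):=d((x,y),\text{gph}\,Y)$, the representation $S(x)=\{y:\ h_{1}(x,y)\le 0,\ h_{2}(x,y)\le 0\}$ recalled just above the statement, together with R-regularity of $S$ at $(\bar x,\bar y)$ w.r.t.\ $\text{dom}\,S$, produces constants $\sigma>0$ and $\delta>0$ with
\[
d\big(y,S(x)\big)\ \le\ \sigma\max\big\{0,\ h_{1}(x,y),\ h_{2}(x,y)\big\}
\]
for every $(x,y)\in\mathbb{U}_{\delta}(\bar x,\bar y)$ such that $x\in\text{dom}\,S$. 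Since $h_{1},h_{2}\ge 0$ everywhere, the right-hand side equals $\sigma\max\{h_{1}(x,y),h_{2}(x,y)\}$.

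Next I would shrink the radius: choose $\epsilon>0$ with $\epsilon\le\delta$ and small enough that $V_{\epsilon}(\bar x,\bar y)\subseteq U\times\mathbb{R}^{m}$. Pick any $(x,y)\in V_{\epsilon}(\bar x,\bar y)$ with $y\in Y(x)$. Then $(x,y)\in\text{gph}\,Y$, so $h_{2}(x,y)=0$; and $x\in\text{dom}\,Y\cap U=\text{dom}\,S\cap U$, so in particular $x\in\text{dom}\,S$ and the displayed R-regularity estimate applies at $(x,y)$. It therefore gives
\[
d\big(y,S(x)\big)\ \le\ \sigma\max\{h_{1}(x,y),0\}\ =\ \sigma\,d\big(f(x,y),\varPhi(x)\big),
\]
which is exactly the LUWSM inequality of Definition~\ref{def-w-1010} with $\lambda:=\sigma^{-1}$ and with the $\epsilon$ just fixed.

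The only genuinely load-bearing step is the membership $x\in\text{dom}\,S$: because R-regularity of $S$ is asserted only relative to $\text{dom}\,S$, without the local domain-coincidence hypothesis an arbitrary lower-level feasible point $(x,y)$ (with $y\in Y(x)$) could have $x\notin\text{dom}\,S$, i.e.\ $S(x)=\emptyset$, and the estimate would be vacuous there --- this is precisely what $\text{dom}\,Y\cap U=\text{dom}\,S\cap U$ excludes. A minor standing technicality is that $h_{1},h_{2}$ should be continuous for the R-regularity notion of Subsection~\ref{Tools from var} to be in force; $h_{2}$ is automatically $1$-Lipschitz, and I would simply regard continuity of $h_{1}$ (hence suitable continuity of $x\mapsto\varPhi(x)$ near $\bar x$) as part of the standing assumptions under which RRCQ is posited, rather than something to be proved here.
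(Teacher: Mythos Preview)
Your argument is correct and follows essentially the same route as the paper: unpack R-regularity of $S$, observe that $h_{2}=0$ on $\text{gph}\,Y$ and $h_{1}\ge 0$, and read off the LUWSM inequality with $\lambda=\sigma^{-1}$. If anything, your treatment of the hypothesis $\text{dom}\,Y\cap U=\text{dom}\,S\cap U$ is more explicit than the paper's, which invokes it only implicitly when restricting to an open ball contained in $\mathbb{U}_{\epsilon}(\bar x,\bar y)\cap(\text{dom}\,S\times\mathbb{R}^{m})$.
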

\begin{proof}
Fix $\left( \bar{x}, \bar{y}\right) \in  \text{gph} \ S$. Since, the mapping $S$ is R-regular at $\left( \bar{x}, \bar{y}\right)$ w.r.t. $\text{dom} \ S$, there exist  $\sigma > 0$ and $\epsilon > 0$ such that for all $\left( x, y\right) \in \mathbb{U}_{\epsilon}\left( \bar{x}, \bar{y}\right)\cap \left( \text{dom} \ S \times \mathbb{R}^{m}\right) $ we have the inequality
\begin{equation*}
d\left( y,S\left( x\right) \right) \ \leq \ \sigma \ \max \{0,d\left( f\left( x,y\right) ,\varPhi\left( x\right)\right) ,d\left( \left( x,y\right),\text{gph} \ Y\right) \}.
\end{equation*}
From the definition of the frontier map, for any $\left( x,y\right) \in \mathbb{U}_{\epsilon}\left( \bar{x}, \bar{y}\right)$ with $d\left( \left( x,y\right),\text{gph} \ Y\right) = 0$, we  have the inequality $d\left( f\left( x,y\right) ,\varPhi\left( x\right)\right)  \geq 0$. Hence, for all $\left( x,y\right) \in \mathbb{U}_{\epsilon}\left( \bar{x}, \bar{y}\right)\cap \left( \text{dom} \ S \times \mathbb{R}^{m}\right) $, 
\begin{equation*}\label{reg}
y\in Y\left( x\right) \Longrightarrow d\left( y,S\left( x\right) \right) \ \leq \ \sigma \ d\left( f\left( x,y\right) ,\varPhi\left( x\right)\right).
\end{equation*}
On the other hand, we can choose an open ball around $\left( \bar{x}, \bar{y}\right)$ which is contained in $\mathbb{U}_{\epsilon}\left( \bar{x}, \bar{y}\right)\cap \left( \text{dom} \ S \times \mathbb{R}^{m}\right)$. Dividing $\left( \ref{reg}\right)$ by $\sigma$, we get the result.
\end{proof}
Finally, note that the relationships between all the constraint qualifications discussed above are summarized in Figure \ref{Figure1}.

\section{Necessary optimality conditions}\label{Necessary optimality conditions}
Our aim in this section is to use the GVFCQ, introduced and studied in the previous section, to derive necessary optimality conditions for problem \eqref{optimistic}--\eqref{llpill}. To proceed, we consider the set
\begin{equation}\label{mapmapmap}
\Pi :=\left( X\times \mathbb{R}^{m}\right) \cap \text{gph} \ S \subset \mathbb{R}^{n}\times\mathbb{R}^{m}
\end{equation}
and the set-valued mapping
\begin{equation}\label{immap}
\Sigma\left( x\right):= f\left( x,Y\left( x\right) \right) := \left\{f\left( x,y\right): \;\, y\in Y\left( x\right) \right\}\;\, \mbox{ for }\;\, x\in\mathbb{R}^{n}.
\end{equation}
In the process, the upper estimates for coderivatives of the optimal solution set-valued mapping $S$ and  the frontier map $\varPhi$ will also be useful. To specifically compute an estimate of the coderivative of $\varPhi^{E}$ (see \eqref{intro1} and related discussion),  we additionally need the following \emph{strong domination property} for the lower-level problem \eqref{llpill}:
\begin{equation}\label{dompro}
f\left( x,Y\left( x\right) \right) \subset \varPhi^{E}\left( x\right)  + \mathbb{R}_{+}^{q} \ \ \ \ \ \forall x\in X.
\end{equation}
This property has been used in the literature with different names; for example, it is used in \cite{Tanino}, where is called $\mathbb{R}_{+}^{q}$-minicomplete property, and utilized to estimate  the contingent derivative of the set-valued mapping $\Sigma$. However, we borrow our vocabulary from the following weaker \emph{domination property} used in \cite{mordu1}:
\[
f\left( x,Y\left( x\right) \right) +\mathbb{R}_{+}^{q} = \varPhi^{E}\left( x\right)  + \mathbb{R}_{+}^{q}.
\]

To construct an estimate of the coderivative of $\varPhi^E$ in the next result, we also need the limiting qualification condition at a reference point $\left(\bar{x},\bar{y}\right)$:
\begin{equation}\label{mqc}
D^{*}S\left( \bar{x},\bar{y}\right) \cap \left( -N\left( \bar{x}, X \right) \right) =\{0\}.
\end{equation}
\begin{thm}\label{estime-co-F}
	Let $\left(\bar{x},\bar{y}\right) \in \text{gph} \ S$. Suppose that $f$ is locally Lipschitzian around $\left( \bar{x},\bar{y}\right)$, that the graph of the image map $\Sigma$ is locally compact around $\bar{x}$, that $Y$ is locally closed around $\left( \bar{x},\bar{y}\right)$ with $\bar{z}=f\left( \bar{x},\bar{y}\right)$ and the strong domination property \eqref{dompro} is satisfied. Suppose in addition that $Y$ is Lipschitz-like around $\left( \bar{x},\bar{y}\right)$. Then, it holds that
	\begin{equation}\label{DEBobo}
	D^{*}\varPhi^{E}\left(\bar{x},\bar{z}\right) \left( z^{*}\right) \subset {\displaystyle \bigcup_{\left( x^{*},y^{*}\right) \in D^{*}f\left(\bar{x},\bar{y}\right) \left( z^{*}\right)}} \ [x^{*}+D^{*} Y\left( \bar{x},\bar{y}\right) \left( y^{*}\right)], \ \ \ \text{for all} \ z^{*}\in\mathbb{R}^{q}
	\end{equation}
and $\varPhi^{E}$ is Lipschitz-like around $\left(\bar{x},\bar{z}\right)$. Furthermore, if the function $f$ is strictly differentiable at $\left( \bar{x},\bar{y}\right)$, then for any $z^{*}\in\mathbb{R}^{q}$, we have
	\begin{equation*}
	D^{*}\varPhi^{E}\left( \bar{x},\bar{z}\right) \left( z^{*}\right) \subset \nabla_{x}f\left( \bar{x},\bar{y}\right)^{*} z^{*}+ D^{*} Y\left( \bar{x},\bar{y}\right) \left(\nabla_{y}f\left( \bar{x},\bar{y}\right)^{*} z^{*}\right) .
	\end{equation*}
\end{thm}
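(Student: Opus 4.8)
The plan is to obtain the coderivative estimate \eqref{DEBobo} by relating the frontier map $\varPhi^E$ to the image map $\Sigma$ via the strong domination property \eqref{dompro}, and then applying the coderivative chain rule to $\Sigma = f\circ(\mathrm{id},Y)$. First I would observe that \eqref{dompro} says precisely that $\Sigma(x)\subseteq \varPhi^E(x)+\mathbb{R}^q_+$ for all $x\in X$, and since trivially $\varPhi^E(x)\subseteq \Sigma(x)$, the two set-valued maps have the same epigraph with respect to $\mathbb{R}^q_+$, i.e.\ $\mathrm{epi}\,\varPhi^E = \mathrm{epi}\,\Sigma$ locally around $(\bar x,\bar z)$. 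This is the key reduction: it lets me transfer any statement about the "epigraphical" behaviour of $\Sigma$ to $\varPhi^E$. In particular $(\bar x,\bar z)\in\mathrm{gph}\,\Sigma$, and because $\mathrm{gph}\,\Sigma$ is locally compact around $\bar x$ and $\varPhi^E$ shares its epigraph with $\Sigma$, one gets that $\varPhi^E$ is inner/order semicontinuous near $(\bar x,\bar z)$, which is what is needed to pass normal cones of the epigraph back to the frontier map. I would cite the relevant epigraphical-multifunction results (the style of \cite{mordu1,mordu3}) for the identity $N((\bar x,\bar z);\mathrm{gph}\,\varPhi^E)\subseteq N((\bar x,\bar z);\mathrm{epi}\,\varPhi^E) = N((\bar x,\bar z);\mathrm{epi}\,\Sigma)$ on the relevant cone of multipliers $z^*$, yielding $D^*\varPhi^E(\bar x,\bar z)(z^*)\subseteq D^*\Sigma(\bar x,\bar z)(z^*)$ for all $z^*$ (the normal-cone directions coming from $\mathbb{R}^q_+$ cause no trouble since the estimate is stated for all $z^*\in\mathbb{R}^q$ and we only need an upper estimate).

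Next I would estimate $D^*\Sigma(\bar x,\bar z)(z^*)$. Writing $\Sigma(x) = f(x,Y(x))$, introduce the intermediate map $g\colon\mathbb{R}^n\rightrightarrows\mathbb{R}^n\times\mathbb{R}^m$, $g(x) = \{x\}\times Y(x) = \mathrm{gph}\,Y$-section, so that $\Sigma = f\circ g$ with $f$ locally Lipschitz. Since $Y$ is Lipschitz-like around $(\bar x,\bar y)$ and locally closed there, $g$ is Lipschitz-like around the corresponding point, so the coderivative chain rule for compositions of a Lipschitz single-valued map with a Lipschitz-like multifunction (see \cite[Theorem 3.13 / Corollary 3.16]{mordu2} style results) applies with no extra qualification condition and gives
\begin{equation*}
D^*\Sigma(\bar x,\bar z)(z^*)\subseteq \bigcup_{(x^*,y^*)\in D^*f(\bar x,\bar y)(z^*)} D^*g(\bar x,\bar y,\bar z)(x^*,y^*).
\end{equation*}
A direct computation of $D^*g$ from $\mathrm{gph}\,g = \{(x,(x,y)) : (x,y)\in\mathrm{gph}\,Y\}$ shows $D^*g(\bar x,\bar y)(x^*,y^*) = x^* + D^*Y(\bar x,\bar y)(y^*)$, which combined with the previous two displays yields exactly \eqref{DEBobo}. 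For the Lipschitz-likeness of $\varPhi^E$ around $(\bar x,\bar z)$: $\Sigma$ is Lipschitz-like around $(\bar x,\bar z)$ because it is the composition of the locally Lipschitz $f$ with the Lipschitz-like $g$ (again \cite{mordu2}), and then $\varPhi^E$ inherits Lipschitz-likeness from $\Sigma$ since they have the same epigraph locally — equivalently one can read it off from the Mordukhovich criterion applied to \eqref{DEBobo} together with the boundedness forced by local compactness of $\mathrm{gph}\,\Sigma$. Finally, the strictly differentiable specialization is immediate: substitute $D^*f(\bar x,\bar y)(z^*) = \{(\nabla_x f(\bar x,\bar y)^* z^*,\nabla_y f(\bar x,\bar y)^* z^*)\}$ into \eqref{DEBobo}.

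The main obstacle I anticipate is the passage $D^*\varPhi^E \subseteq D^*\Sigma$, i.e.\ justifying rigorously that the limiting normal cone to $\mathrm{gph}\,\varPhi^E$ is dominated by that to $\mathrm{epi}\,\Sigma = \mathrm{epi}\,\varPhi^E$ at the reference point. This requires that $\bar z$ be a Pareto-minimal (not merely weakly minimal) point of $\Sigma(\bar x)$ — guaranteed since $(\bar x,\bar y)\in\mathrm{gph}\,S$ and $\varPhi = \varPhi^E$ here — and that an order-semicontinuity/local-compactness hypothesis hold so that nearby efficient points exist; this is exactly where local compactness of $\mathrm{gph}\,\Sigma$ around $\bar x$ and the strong domination property \eqref{dompro} are consumed. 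The Lipschitz-like hypothesis on $Y$ is used only in the chain-rule step, not in this reduction. I would treat this reduction as a lemma (or cite it from \cite{mordu1} or \cite{Tanino}-type results recast for the limiting calculus), since once it is in hand the remainder is a routine application of the Mordukhovich chain rule.
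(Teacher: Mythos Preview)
Your overall plan—reduce $D^*\varPhi^E$ to $D^*\Sigma$ and then apply the coderivative chain rule to $\Sigma$—matches the paper's, and the chain-rule step is fine. The gap is in the reduction $D^*\varPhi^E(\bar x,\bar z)(z^*)\subseteq D^*\Sigma(\bar x,\bar z)(z^*)$.

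You route this through epigraphs, claiming $N(\cdot;\mathrm{gph}\,\varPhi^E)\subseteq N(\cdot;\mathrm{epi}\,\varPhi^E)=N(\cdot;\mathrm{epi}\,\Sigma)$. Two problems. First, since $\mathrm{gph}\,\varPhi^E\subset\mathrm{epi}\,\varPhi^E$, normal-cone monotonicity goes the \emph{opposite} way; the results in \cite{mordu1,liao} that do deliver an inclusion in your direction require $z^*\in\mathrm{int}\,\mathbb{R}^q_+$ or $z^*\in K^*_{up}$, and the paper's explicit point (see the paragraph immediately after the theorem) is that those restrictions are too strong for the downstream optimality conditions. Your parenthetical that ``the normal-cone directions coming from $\mathbb{R}^q_+$ cause no trouble'' does not repair this. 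Second, your chain terminates at $N(\cdot;\mathrm{epi}\,\Sigma)$, but $D^*\Sigma$ is defined from $\mathrm{gph}\,\Sigma$; you never bridge back, and the needed inclusion $N(\cdot;\mathrm{epi}\,\Sigma)\subseteq N(\cdot;\mathrm{gph}\,\Sigma)$ is again wrong-direction in general.

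The paper avoids epigraphs and works directly with graphs: it argues that, in a neighbourhood of $(\bar x,\bar z)$, one has $\mathrm{gph}\,\Sigma=\mathrm{gph}\,\varPhi^E$, which immediately gives equality of coderivatives for \emph{every} $z^*\in\mathbb{R}^q$. The argument is by contradiction: a nearby point of $\mathrm{gph}\,\Sigma\setminus\mathrm{gph}\,\varPhi^E$ lies in $\mathrm{epi}\,\varPhi^E$ by the strong domination property; local compactness of $\mathrm{gph}\,\Sigma$ forces $\varPhi^E$ to be order semicontinuous near $(\bar x,\bar z)$ via \cite[Proposition~4.3(iv)]{li-xue}; and then Levy's nonsingularity criterion \cite[Theorem~1.3]{levy} yields a contradiction with $D^*\Sigma(\bar x,\bar z)(0)=\{0\}$. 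This last equality comes precisely from the Lipschitz-likeness of $Y$ fed through the chain-rule estimate for $D^*\Sigma$, so contrary to your final paragraph, the Lipschitz-like hypothesis on $Y$ is essential in the reduction step as well, not only in the chain rule.
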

\begin{proof}
	First, observe that the image map $\Sigma$ in $\left( \ref{immap}\right)$   has a composite form. Hence, applying to this composition the coderivative chain rule from \cite[Theorem 3.18(i)]{mordu2} for the locally Lipschitzian
	cost mapping $f\left( x,y\right)$, we get
	\begin{equation}\label{aubin-1}
	D^{*}\Sigma\left( \bar{x},\bar{z}\right) \left( z^{*}\right) \subset {\displaystyle \bigcup_{\left( x^{*},y^{*}\right) \in D^{*}f\left( \bar{x},\bar{y}\right) \left( z^{*}\right)}} \ [x^{*}+D^{*} Y\left( \bar{x},\bar{y}\right) \left( y^{*}\right)], \ \ \ z^{*}\in\mathbb{R}^{q} .
	\end{equation}
	Fix $z^{*}\in\mathbb{R}^{q}$ and let us prove that $D^{*}\varPhi^{E}\left( \bar{x},\bar{z}\right) \left( z^{*}\right) \subset D^{*}\Sigma\left( \bar{x},\bar{z}\right) \left( z^{*}\right)$. Let $x^{*}\in D^{*}\varPhi^{E}\left( \bar{x},\bar{z}\right) \left( z^{*}\right)$. Based on \eqref{codDefinition}, there are sequences $\substack{ {\left( x_{k},z_{k}\right)\overset{\text{gph} \ \varPhi^{E}}{\rightarrow }\left( \bar{x},\bar{z}\right)} }$ and $\left( x^{*}_{k},z^{*}_{k}\right) \rightarrow \left(x^{*},z^{*} \right)$ such that
	 \begin{equation*}
	 {\displaystyle \limsup_{\substack{ {\left( x_{k_{s}},z_{k_{s}}\right)\overset{\text{gph} \ \varPhi^{E}}{\rightarrow }\left(x_{k},z_{k}\right)} }}} \ \frac{\left\langle x^{*},x_{k_{s}}-x_{k}\right\rangle  -\left\langle z^{*},z_{k_{s}}-z_{k}\right\rangle}{\parallel x_{k_{s}}-x_{k}\parallel + \parallel z_{k_{s}}-z_{k}\parallel} \leq 0.
	 \end{equation*}
	We claim that in some neighborhood $U$ of $\left( \bar{x},\bar{z}\right)$ for any $\left( x_{k_{s}},z_{k_{s}}\right)\in U$ such that
\[
\substack{ {\left( x_{k_{s}},z_{k_{s}}\right)\overset{\text{gph} \ \varPhi^{E}}{\rightarrow }\left(x_{k},z_{k}\right)}}, \;\,\mbox{ one has }\;\,\substack{ {\left( x_{k_{s}},z_{k_{s}}\right)\overset{\text{gph} \ \Sigma}{\rightarrow }\left(x_{k},z_{k}\right)} }.
\]
Indeed, suppose contrary to our claim, that there exists
\[
\left( x_{k_{s}},z_{k_{s}}\right)\in \text{gph} \ \Sigma\setminus \text{gph} \ \varPhi^{E}\; \mbox{ such that } \;\left( x_{k_{s}},z_{k_{s}}\right)\rightarrow \left(x_{k},z_{k}\right).
\]
It follows immediately from the strong domination property \eqref{dompro} that $\left( x_{k_{s}},z_{k_{s}}\right)\in \text{epi} \ \varPhi^{E}$. Since  $\Sigma$ is locally compact around $\bar{x}$ it follows from \cite[Proposition 4.3 (iv)]{li-xue} that $\varPhi^{E}$ is locally order semicontinuous around $\left( \bar{x},\bar{z}\right)$. Hence, for $\left( x_{k_{s}},z_{k_{s}}\right)\in \text{epi} \ \varPhi^{E}$, there exists a sequence $\left( x_{k_{s}},t_{k_{s}}\right)\in \text{gph} \ \varPhi^{E}$ such that $z_{k_{s}} \in t_{k_{s}} +\mathbb{R}_{+}^{q}$. Then applying \cite[Theorem 1.3]{levy}, we get a contradiction while considering the fact that $D^{*}\Sigma\left( \bar{x},\bar{z}\right) \left( 0\right)=\{0\}$, which results from the Lipschitz-likeness of $Y$ and the inclusion in \eqref{aubin-1}. The above arguments ensures that $x^{*}\in  D^{*}\Sigma\left( \bar{x},\bar{z}\right) \left( z^{*}\right)$. Combining this with \eqref{aubin-1}, the desired result follows.
\end{proof}
Note that our formula in \eqref{DEBobo} is the same as the one obtained in \cite{mordu1}. However, in the later reference, it is required that $z^*$ be in the interior of the corresponding cone; such a requirement is very restrictive and will not make it possible to construct the necessary optimality conditions, which  represent our main goal in this section. Furthermore, under the strong domination property \eqref{dompro}, the paper \cite{liao} provides an estimate of the coderivative of $\varPhi^{E}$ for all $z^{\ast}$ in the uniformly positive polar to cone $\mathbb{R}_{+}^{q}$ defined by
\begin{equation*}
	 	K^{\ast}_{up} := \left\{\alpha \in \mathbb{R}^{q}: \; \exists \beta > 0, \; \langle \alpha,\, z\rangle \geq \beta \| z\|, \; \forall z\in \mathbb{R}_{+}^{q}\right\}.
\end{equation*}
 As it can be seen in Theorem \ref{estime-co-F}, our estimate of the coderivative of $\varPhi^{E}$ is calculated at any point $z^*\in \mathbb{R}^q$, thus enabling an easy derivation of optimality conditions for problem \eqref{optimistic}--\eqref{llpill}, as it will be clear by the end of this section. It is also important to note that a version of the strong domination property can well be defined for $\varPhi^W$.
However, it is unclear how it would help in obtaining an estimate of the coderivative of $\varPhi^W$ analogous to the one derived in Theorem \ref{estime-co-F} for efficient Pareto  points. 

The next proposition gives a sufficient condition for the family of parametric linear programming problems  $\left( \ref{linbil}\right)$ to  satisfy the strong domination property \eqref{dompro}.
\begin{prop}\label{StrongDominationScenario}
Assume that for all $x\in X$,  the set $Y\left( x\right) =\{y\in\mathbb{R}^{m}: \;\, Ax+By\leq d \}$ is bounded. Then, problem \eqref{linbil} satisfies the strong domination property \eqref{dompro}.
\end{prop}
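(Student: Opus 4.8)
The plan is to recognize that, for a fixed $x$, the strong domination property \eqref{dompro} is nothing but the classical external stability (or ``$\mathbb{R}_{+}^{q}$-minicompleteness'') of the Pareto efficient set of the image set $f\left(x,Y\left(x\right)\right)=C\,Y\left(x\right)\subseteq\mathbb{R}^{q}$, which holds for any nonempty compact subset of $\mathbb{R}^{q}$. So first I would fix $x\in X$ and reduce to a purely static statement about the set $P_{x}:=C\,Y\left(x\right)$. If $Y\left(x\right)=\emptyset$ there is nothing to prove, so I assume $Y\left(x\right)\neq\emptyset$. Since $Y\left(x\right)=\{y:\,Ax+By\leq d\}$ is closed (being the preimage of a closed set under an affine map) and bounded by hypothesis, it is compact; hence $P_{x}$, its image under the linear map $C$, is a nonempty compact subset of $\mathbb{R}^{q}$ (in fact a polytope, though only compactness will be used).

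Second, I would prove the static fact: every point of a nonempty compact set $P\subseteq\mathbb{R}^{q}$ lies in $\text{Eff}\left(P;\,\mathbb{R}_{+}^{q}\right)+\mathbb{R}_{+}^{q}$. Fix $p\in P$ and consider the section $P_{\leq p}:=\{w\in P:\,p-w\in\mathbb{R}_{+}^{q}\}$, which is nonempty (it contains $p$) and compact (intersection of the compact $P$ with a closed order interval). Writing $e:=(1,\dots,1)^{\top}\in\mathbb{R}^{q}$, I pick a minimizer $\bar p$ of the continuous functional $w\mapsto\langle e,w\rangle$ over the compact set $P_{\leq p}$. Then $\bar p\leq p$ componentwise, so $p\in\bar p+\mathbb{R}_{+}^{q}$, and it remains to check $\bar p\in\text{Eff}\left(P;\,\mathbb{R}_{+}^{q}\right)$. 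If not, there is $w\in P$ with $w\leq\bar p$ and $w\neq\bar p$; then $w\leq\bar p\leq p$ forces $w\in P_{\leq p}$, while $\bar p-w\in\mathbb{R}_{+}^{q}\setminus\{0\}$ gives $\langle e,w\rangle<\langle e,\bar p\rangle$ (strict, since $e$ has all positive entries), contradicting the minimality of $\bar p$. Hence $\bar p$ is efficient and $p\in\bar p+\mathbb{R}_{+}^{q}\subseteq\text{Eff}\left(P;\,\mathbb{R}_{+}^{q}\right)+\mathbb{R}_{+}^{q}$.

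Finally, applying this with $P=P_{x}$ and recalling that $\varPhi^{E}\left(x\right)=\text{Eff}\left(f\left(x,Y\left(x\right)\right);\,\mathbb{R}_{+}^{q}\right)=\text{Eff}\left(P_{x};\,\mathbb{R}_{+}^{q}\right)$ yields $f\left(x,Y\left(x\right)\right)\subseteq\varPhi^{E}\left(x\right)+\mathbb{R}_{+}^{q}$, and since $x\in X$ was arbitrary this is exactly \eqref{dompro}. I do not expect a genuine obstacle here: the only role of the linear-programming structure is to guarantee, via closedness together with the boundedness hypothesis, that the image set is compact, after which the statement is the standard domination/external-stability theorem for the efficient set of a compact set (cf.\ the vector optimization references cited in the paper). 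At most I would add a remark that polyhedrality would give the sharper conclusion that $\varPhi^{E}\left(x\right)$ is a finite union of faces of $P_{x}$, but that this is not needed for \eqref{dompro}.
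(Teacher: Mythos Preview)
Your proof is correct and follows essentially the same strategy as the paper's: restrict to the domination section below the given point, minimize a linear functional with strictly positive weights over it (the paper takes a generic $-u$ with $u\in\mathrm{int}\,\mathbb{R}_{-}^{q}$ and invokes support-function machinery in the $y$-space, you take $e=(1,\dots,1)$ and apply Weierstrass directly in the image space), and argue by contradiction that the minimizer is Pareto efficient. Your presentation is a bit cleaner in that it isolates the argument as the standard external-stability result for an arbitrary nonempty compact subset of $\mathbb{R}^{q}$, making explicit that the linear structure of \eqref{linbil} is used only to secure compactness of $C\,Y(x)$.
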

		\begin{proof}
			Fix $x\in X$ and let $\bar{y}\in Y\left( x\right)$. If $\bar{y}\in S\left( x\right)$, then since $C\bar{y}\in C\bar{y}+\mathbb{R}_{+}^{q}$,  we have the inclusion $f\left( x,Y\left( x\right) \right) \subset \varPhi^{E}\left( x\right)  +\mathbb{R}_{+}^{q}$. Suppose that $\bar{y}\notin S\left( x\right)$ and consider the set
			\begin{equation*}
			{\mathcal DP}\left( x,\bar{y}\right) =\left\{y\in G\left( x\right)\left|\;\, C\bar{y}-Cy\in \mathbb{R}_{+}^{q} \right.\right\}.
			\end{equation*}
			Since $Y\left( x\right) $ is bounded, ${\mathcal DP}\left( x,\bar{y}\right)$ is also bounded. Hence, its support function $\sigma\left( \cdot ,{\mathcal DP}\left( x,\bar{y}\right)\right)$ is defined everywhere. Now, choose $u\in \text{int} \ \mathbb{R}_{-}^{q}$. Then, from \cite[Corollary 23.5.3]{rock} there exists $z\in {\mathcal DP}\left( x,\bar{y}\right)$ such that $\langle C^{T}u, z \rangle = \sigma\left( C^{T}u ,{\mathcal DP}\left( x,\bar{y}\right)\right)$. We claim that $z\in S\left( x\right)$. Indeed, suppose, in contrary to our claim, that there exist $v\in Y\left( x\right)$ such that
			\begin{equation*}
				Cv-Cz \in - \mathbb{R}_{+}^{q} \ \ \text{and} \ \ Cv\neq Cz.
			\end{equation*}
			Or equivalently, that
			\begin{equation*}
				Cv-Cz \in - \mathbb{R}_{+}^{q}\setminus \{0\} \ \  \text{for some} \ v\in Y\left( x\right).
			\end{equation*}
			On the one side, $C\bar{y}-Cv=C\bar{y}-Cz+Cz-Cv \in \mathbb{R}_{+}^{q}+\mathbb{R}_{+}^{q}\setminus \{0\}\subset \mathbb{R}_{+}^{q}$. Consequently, $v\in {\mathcal DP}\left( x,\bar{y}\right)$.
			On the other side, since, $u\in \text{int} \ \mathbb{R}_{-}^{q}$, one has $0 < \langle u , Cv-Cz\rangle$. Thus, $\sigma\left( C^{T}u ,{\mathcal DP}\left( x,\bar{y}\right)\right)<  \langle C^{T}u , v\rangle$. Which is a contradiction. Finally, we have $z\in {\mathcal DP}\left( x,\bar{y}\right)$ and $Cz\in \varPhi^{E}\left( x\right)$, it follows that $C\bar{y}-Cz \in\mathbb{R}_{+}^{q}$, which concludes the proof.
		\end{proof}		
		
Now, we come to the final step before the statement of the main result of this section; i.e., we provide an estimate for the coderivative of the lower-level optimal solution set-valued mapping $S$ \eqref{intro2} under the GVFCQ  \eqref{calm1}.
\begin{prop}\label{estime-co-s}
Consider the lower-level optimal solution set-valued mapping $S$ \eqref{intro2} and suppose that $f$ is locally Lipschitz continuous and the set $\mbox{gph} \ Y$ and $\mbox{gph} \ \varPhi$ are closed. Furthermore, assume that the GVFCQ holds at $(\bar x, \bar y)$. Then it holds that
\[
D^*S(\bar x, \bar y)(y^*)  \subset \underset{(u^*, v^*): \;\, u^* \in D^*\varPhi\left(\bar x, \, f(\bar x, \bar y)\right)(-v^*)}{\bigcup}\;\;\underset{(a^*, b^*)\in D^*f(\bar x, \bar y)(v^*)}{\bigcup}
\left\{u^* + a^* + D^*Y(\bar x,\, \bar y)(y^* + b^*)\right\}.
\]
If additionally, $f$ is strictly differentiable, then we have
\[
D^*S(\bar x, \bar y)(y^*)  \subset \underset{(u^*, v^*): \;\, u^* \in D^*\varPhi\left(\bar x,\,f(\bar x, \bar y)\right)(-v^*)}{\bigcup}
\left\{u^* + \nabla_x f(\bar x, \bar y)^\top v^* + D^*Y(\bar x,\, \bar y)\left(y^* + \nabla_y f(\bar x, \bar y)^\top v^*\right)\right\}.
\]
\end{prop}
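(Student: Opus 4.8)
The plan is to exhibit $\text{gph}\,S$ as the solution set of a locally Lipschitzian constraint system and to read the GVFCQ as a calmness-type constraint qualification for that system, which then licenses a chain rule for the limiting normal cone. Set $\bar z:=f(\bar x,\bar y)$ and introduce the (locally Lipschitzian) single-valued map $g:\mathbb{R}^{n}\times\mathbb{R}^{m}\to\mathbb{R}^{n}\times\mathbb{R}^{q}$ given by $g(x,y):=(x,f(x,y))$. Then
\[
\text{gph}\,S=\left\{(x,y)\in\text{gph}\,Y:\; g(x,y)\in\text{gph}\,\varPhi\right\},\qquad \Psi(u,v)=\left\{(x,y)\in\text{gph}\,Y:\; g(x,y)+(u,v)\in\text{gph}\,\varPhi\right\},
\]
so $\Psi$ is precisely the canonically perturbed feasible-set mapping of this system and, as is readily checked, $\Psi(0,0)=\text{gph}\,S$. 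Thus the GVFCQ is exactly the calmness of this perturbed mapping at $\left(0,0,\bar x,\bar y\right)$, i.e.\ the metric subregularity of the associated constraint mapping at $\left((\bar x,\bar y),0\right)$.

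First I would invoke the intersection/chain rule for the limiting normal cone that follows from this calmness (the calmness/metric-subregularity constraint qualification; see the works of Henrion, Jourani and Outrata, and of Mordukhovich and coauthors). Since $\text{gph}\,Y$ and $\text{gph}\,\varPhi$ are closed and $g$ is locally Lipschitz, it yields
\[
N\big((\bar x,\bar y);\,\text{gph}\,S\big)\ \subseteq\ \bigcup_{w^{*}\in N((\bar x,\bar z);\,\text{gph}\,\varPhi)}\Big[D^{*}g(\bar x,\bar y)(w^{*})+N\big((\bar x,\bar y);\,\text{gph}\,Y\big)\Big].
\]

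Next I would evaluate $D^{*}g(\bar x,\bar y)$: a routine coderivative computation for this partially linear map — for instance via the sum rule applied after splitting off the smooth first component, which needs no qualification — gives $D^{*}g(\bar x,\bar y)(w_{1}^{*},w_{2}^{*})=\{(w_{1}^{*}+a^{*},b^{*}):\,(a^{*},b^{*})\in D^{*}f(\bar x,\bar y)(w_{2}^{*})\}$ for all $(w_{1}^{*},w_{2}^{*})\in\mathbb{R}^{n}\times\mathbb{R}^{q}$. Writing $w^{*}=(u^{*},v^{*})$, the membership $w^{*}\in N((\bar x,\bar z);\,\text{gph}\,\varPhi)$ reads $u^{*}\in D^{*}\varPhi(\bar x,\bar z)(-v^{*})$, and membership in $N((\bar x,\bar y);\,\text{gph}\,Y)$ is rewritten through $D^{*}Y$. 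Plugging these into the display above and passing to $D^{*}S(\bar x,\bar y)(y^{*})=\{\zeta:\,(\zeta,-y^{*})\in N((\bar x,\bar y);\,\text{gph}\,S)\}$ — so that matching the $\mathbb{R}^{m}$-component forces the argument of $D^{*}Y$ to be $y^{*}+b^{*}$ — produces exactly the asserted inclusion. The strictly differentiable case is then immediate upon substituting $D^{*}f(\bar x,\bar y)(v^{*})=\{(\nabla_{x}f(\bar x,\bar y)^{\top}v^{*},\nabla_{y}f(\bar x,\bar y)^{\top}v^{*})\}$.

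The crux is the first step: one must be sure that calmness of $\Psi$ at the single reference point — equivalently, the metric subregularity CQ — is strong enough to bound the \emph{limiting} normal cone to $\text{gph}\,S$ (not merely the Fr\'echet one), and that no additional inner qualification is needed to run the chain rule through the Lipschitzian composite $g$. Once the correct form of that rule is secured, the remaining work (the coderivative formula for $g$ and the bookkeeping of arguments) is routine.
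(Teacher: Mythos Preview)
Your proposal is correct and follows essentially the same route as the paper: the paper writes $\text{gph}\,S=\Omega\cap\psi^{-1}(\Lambda)$ with $\Omega=\text{gph}\,Y$, $\psi(x,y)=(x,f(x,y))$, $\Lambda=\text{gph}\,\varPhi$, applies the calmness-based normal-cone rule of Henrion--Jourani--Outrata (exactly the reference you anticipate), and then unpacks $D^{*}\psi$ via the inclusion $D^{*}\psi(\bar x,\bar y)(u^{*},v^{*})\subset(u^{*},0)+D^{*}f(\bar x,\bar y)(v^{*})$. Your map $g$ is the paper's $\psi$, and your bookkeeping of the $D^{*}Y$ argument and the strictly differentiable specialization match as well.
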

\begin{proof}
Note that the graph of $S$ \eqref{intro2} can be written as
\[
\begin{array}{l}
\mbox{gph} \ S = \Omega \cap \psi^{-1}\left(\Lambda\right) \;\, \mbox{ with } \;\, \Omega:=\mbox{gph} \ Y,
\;\,
\psi(x,y):=
\left(\begin{array}{c}
      x\\
      f(x,y)
\end{array}\right),\;\,  \mbox{ and }\;\, \Lambda:=\mbox{gph} \ \varPhi.
\end{array}
\]
Then, based on the assumptions made, it follows from \cite[Theorem 4.1]{henrion} that
\[
N\left(\left(\bar x, \bar y\right);\; \mbox{gph}~S\right) \subset
\underset{(u^*, v^*) \in N\left(\psi(\bar x, \bar y); \; \Lambda\right)}{\bigcup}D^*\psi(\bar x, \bar y)(u^*, v^*) \;\; + N\left((\bar x, \bar y); \; \Omega\right).
\]
Hence, considering the definition of the concept of coderivative in \eqref{codDefinition}, we have
\[
\begin{array}{lll}
D^*S(\bar x, \bar y)(y^*)
& \subset & \underset{(u^*, v^*) \in N\left(\psi(\bar x, \bar y); \; \Lambda\right)}{\bigcup}\left\{x^*\in \mathbb{R}^n\left|\; (x^*, -y^*)\in
 N\left((\bar x, \bar y); \; \Omega\right)
+ D^*\psi(\bar x, \bar y)(u^*, v^*) \right.\right\}\\[3ex]
& \subset & \underset{(u^*, v^*) \in N\left(\psi(\bar x, \bar y); \; \Lambda\right)}{\bigcup}\left\{x^*\in \mathbb{R}^n\left|\; (x^*-u^*, -y^*)\in  N\left((\bar x, \bar y); \; \Omega\right)
+ D^*f(\bar x, \bar y)(v^*) \right.\right\}\\[3ex]
& \subset & \underset{(u^*, v^*) \in N\left(\psi(\bar x, \bar y); \; \Lambda\right)}{\bigcup}\left\{x^*\in \mathbb{R}^n\left|\;\exists (a^*, b^*)\in D^*f(\bar x, \bar y)(v^*):\right. \right.\\[3ex]
&         &\qquad \qquad \qquad\qquad\qquad \qquad  \left.\left. x^* - u^* - a^* \in  D^*Y(\bar x,\, \bar y)(y^* + b^*) \right.\right\}\\[2ex]
\end{array}
\]
with the second inclusion resulting from
\[
D^*\psi(\bar x, \bar y)(u^*, v^*) \subset \left(\begin{array}{c}
                                                    u^*\\
                                                     0
                                                     \end{array}\right) + D^*f(\bar x, \bar y)(v^*).
\]
Clearly, the last inclusion in the above sequence of inclusions gives the desired result for the upper bound of $D^*S(\bar x,\,\bar y)(y^*)$ when $f$ is locally Lipschitz continuous. The case where $f$ is strictly differentiable obviously follows from  $D^*f(\bar x, \bar y)(v^*)=\nabla f(\bar x, \bar y)^\top v^*$.
\end{proof}
What is nice with this result is not the construct process of the proof, which is not necessarily new, but its reliance on the GVFCQ and the corresponding rich set of sufficient conditions provided in the previous section. Such an approach does not seem to have been used in the literature to construct an estimate of the coderivative of the optimal solution set-valued mapping of a parametric multiobjective optimization problem.

We are now ready to state one of the main results of this paper, providing new
necessary optimality conditions for the multiobjective bilevel optimization problem \eqref{optimistic}--\eqref{llpill}.
\begin{thm}\label{cnnsooth}
Let $(\bar x, \bar y)$ be a local efficient/weakly efficient Pareto point for problem \eqref{optimistic}--\eqref{llpill}. We assume that the function $F$ and $f$ are Lipschitz continuous around $(\bar x, \bar y)$ and suppose that $X$, $\mbox{gph}~S$, $\mbox{gph}~Y$, and $\mbox{gph}~\varPhi$ are closed sets. Furthermore, assume that the GVFCQ holds at $(\bar x, \bar y)$. Then, there exist vectors $v^{*}\in \mathbb{R}^{q}$ and  $w^{*}\in {\mathbb{R}}_{+}^{p}$ with $\| w^{*}\| =1$ such that
\begin{equation}\label{equ-before}
\begin{array}{l}
0  \in   \partial \langle w^{*},\; F\rangle \left(\bar{x},\bar{y}\right) + \partial \langle v^{*},\; f\rangle \left(\bar{x},\bar{y}\right) +  D^{*}\varPhi\left(\bar{x}, f(\bar x, \bar y)\right)\left( -v^{*}\right)\times \{0\}\\[1ex]
  \qquad \qquad \qquad \qquad \qquad \qquad \qquad + \;\, N\left(\left(\bar{x},\bar{y}\right);\; \mbox{gph}~Y\right)+ N\left(\bar{x};\; X\right)\times \{0\}.
\end{array}
\end{equation}
If $\varPhi = \varPhi^E$ in \eqref{equ-before} and additionally, gph$\Sigma$ is locally compact around $\bar{x}$, $Y$ is locally closed and Lipschitz-like around $\left( \bar{x},\bar{y}\right)$, and the strong domination property \eqref{dompro} is satisfied.  Then, there exist vectors $v^{*}\in \mathbb{R}^{q}$, $\left( \alpha^{*},\beta^{*}\right) \in \partial\langle - v^*, \; f\rangle \left(\bar{x},\bar{y}\right)$, and $w^{*}\in {\mathbb{R}}_{+}^{p}$ with $\| w^{*}\| =1$ such that
	\begin{equation}\label{insert-1}
	\begin{array}{l}
	\left(-\alpha^{*},\, 0\right)   \in   \partial \langle w^{*}, F\rangle \left(\bar{x},\bar{y}\right) + \partial \langle v^*, f\rangle\left(\bar{x},\bar{y}\right) +  D^{*}Y\left(\bar{x},\bar{y}\right)\left(\beta^*\right) \times \{0\}\\
	  \qquad \qquad \qquad \qquad \qquad \qquad \qquad + \;\, N\left( \left(\bar{x},\bar{y}\right);~\text{gph}~Y\right) + N\left(\bar{x};\; X\right)\times \{0\}.
	\end{array}
	\end{equation}
If additionally, $F$ and $f$ are strictly differentiable at the point $(\bar x, \bar y)$, then there exist vectors $v^{*}\in \mathbb{R}^{q}$ and $w^{*}\in {\mathbb{R}}_{+}^{p}$ with $\| w^{*}\| =1$ such that we have
	\begin{equation}\label{thm-cn1fp1}
\begin{array}{lll}
0 \in \nabla_x F\left(\bar{x}, \bar{y}\right)^\top w^{*} & + & D^{*}Y\left(\bar{x},\bar{y}\right)\left(-\nabla_{y}f\left(\bar{x},\bar{y}\right)^\top v^{*}\right)\\
 & + & D^{*}Y\left(\bar{x},\bar{y}\right)\left(\nabla_{y}F\left(\bar{x},\bar{y}\right)^\top w^{*} + \nabla_{y}f\left(\bar{x},\bar{y}\right)^\top v^{*}\right) + N\left(\bar x; \; X\right).
\end{array}
\end{equation}
%
\end{thm}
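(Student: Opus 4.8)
The plan is to pass from the multiobjective optimality condition to a scalarised one, then substitute the coderivative estimate for $S$ from Proposition~\ref{estime-co-s}, and finally specialise. I would record the feasible set of \eqref{optimistic1} as $\Pi=(X\times\mathbb{R}^m)\cap\text{gph}\,S$ (see \eqref{mapmapmap}) and note that a local efficient Pareto point is in particular a local weakly efficient Pareto point, so in either case $(\bar x,\bar y)$ is a local weak Pareto minimiser of $F$ over $\Pi$. The first step is the standard linear scalarisation for a locally Lipschitzian vector objective over a closed set: $(\bar x,\bar y)$ locally minimises $(x,y)\mapsto\max_i\bigl(F_i(x,y)-F_i(\bar x,\bar y)\bigr)$ over $\Pi$, so the generalised Fermat rule together with the limiting sum and max rules (see, e.g., \cite{mordu2}) yield some $w^*\in\mathbb{R}^p_+$ with $\|w^*\|=1$ and
\[
0\in\partial\langle w^*,F\rangle(\bar x,\bar y)+N\bigl((\bar x,\bar y);\,\Pi\bigr).
\]
Since $X$ and $\text{gph}\,S$ are closed, the limiting normal cone intersection rule then gives $N((\bar x,\bar y);\Pi)\subset N(\bar x;X)\times\{0\}+N((\bar x,\bar y);\text{gph}\,S)$, the required qualification condition $D^*S(\bar x,\bar y)(0)\cap\bigl(-N(\bar x;X)\bigr)=\{0\}$ being contained in the limiting qualification condition \eqref{mqc}.

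Next I would plug in the estimate of $N((\bar x,\bar y);\text{gph}\,S)$ obtained inside the proof of Proposition~\ref{estime-co-s}: with $\psi(x,y)=(x,f(x,y))$ and $\Lambda=\text{gph}\,\varPhi$, and under GVFCQ and the stated closedness,
\[
N\bigl((\bar x,\bar y);\text{gph}\,S\bigr)\subset\bigcup_{(u^*,v^*)\in N(\psi(\bar x,\bar y);\,\Lambda)}\Bigl[(u^*,0)+\partial\langle v^*,f\rangle(\bar x,\bar y)+N\bigl((\bar x,\bar y);\text{gph}\,Y\bigr)\Bigr],
\]
where by \eqref{codDefinition} a pair $(u^*,v^*)\in N(\psi(\bar x,\bar y);\Lambda)$ is exactly one with $u^*\in D^*\varPhi\bigl(\bar x,f(\bar x,\bar y)\bigr)(-v^*)$, and where $D^*f(\bar x,\bar y)(v^*)\subset\partial\langle v^*,f\rangle(\bar x,\bar y)$ has been used. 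Combining this with the two inclusions above and collecting the $N(\bar x;X)\times\{0\}$, $\partial\langle w^*,F\rangle$, $\partial\langle v^*,f\rangle$, $D^*\varPhi(\cdot)(-v^*)\times\{0\}$ and $N(\cdot;\text{gph}\,Y)$ contributions produces \eqref{equ-before}, with $v^*$ the vector paired with $u^*$.

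For \eqref{insert-1} I would put $\varPhi=\varPhi^E$ in \eqref{equ-before} and replace the term $D^*\varPhi^E\bigl(\bar x,f(\bar x,\bar y)\bigr)(-v^*)$ using Theorem~\ref{estime-co-F}: under the extra hypotheses (local compactness of $\text{gph}\,\Sigma$, local closedness and Lipschitz-likeness of $Y$, and the strong domination property \eqref{dompro}) one has $D^*\varPhi^E\bigl(\bar x,f(\bar x,\bar y)\bigr)(-v^*)\subset\bigcup_{(\alpha^*,\beta^*)\in\partial\langle-v^*,f\rangle(\bar x,\bar y)}\bigl[\alpha^*+D^*Y(\bar x,\bar y)(\beta^*)\bigr]$, again using $D^*f(\bar x,\bar y)(-v^*)\subset\partial\langle-v^*,f\rangle(\bar x,\bar y)$; moving the resulting $(\alpha^*,0)$ to the left-hand side gives \eqref{insert-1}. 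Finally, if $F$ and $f$ are strictly differentiable at $(\bar x,\bar y)$ then $\partial\langle w^*,F\rangle(\bar x,\bar y)$, $\partial\langle v^*,f\rangle(\bar x,\bar y)$ and $\partial\langle-v^*,f\rangle(\bar x,\bar y)$ are the singletons built from the transposed Jacobians acting on $w^*$, $v^*$ and $-v^*$; substituting these into \eqref{insert-1} and splitting into the $x$- and $y$-coordinates, the term $-\alpha^*=\nabla_xf(\bar x,\bar y)^\top v^*$ cancels the contribution of $\partial\langle v^*,f\rangle$ in the $x$-slot, the $y$-coordinate forces the argument of the $N(\cdot;\text{gph}\,Y)$ term to equal $\nabla_yF(\bar x,\bar y)^\top w^*+\nabla_yf(\bar x,\bar y)^\top v^*$ (hence a $D^*Y(\bar x,\bar y)$ contribution with that argument), and $\beta^*=-\nabla_yf(\bar x,\bar y)^\top v^*$ pins down the argument of the remaining $D^*Y$ term; reading off the $x$-coordinate then yields \eqref{thm-cn1fp1}.

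The conceptually substantive steps — upgrading calmness of $\Psi$ to a coderivative bound for $S$, and estimating $D^*\varPhi^E$ — are already carried out in Proposition~\ref{estime-co-s} and Theorem~\ref{estime-co-F}, so what remains is essentially bookkeeping. The two points I would watch most carefully are the coderivative sign conventions (the pairing $u^*\in D^*\varPhi(\cdot)(-v^*)$ with $(u^*,v^*)\in N(\cdot;\text{gph}\,\varPhi)$, and the analogous one for $Y$), which propagate through every substitution, and the splitting of the upper-level constraint $X$ off $\text{gph}\,S$, which is the single step that needs a qualification condition beyond GVFCQ — there I would invoke \eqref{mqc} and note that the intersection rule can otherwise fail. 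It is also worth confirming that the scalarisation supplies a genuinely nonzero $w^*$, so that after normalisation $\|w^*\|=1$ and the optimality condition is non-degenerate.
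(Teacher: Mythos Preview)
Your proposal is correct and follows essentially the same route as the paper: scalarise to obtain $0\in\partial\langle w^*,F\rangle(\bar x,\bar y)+N((\bar x,\bar y);\Pi)$, split $\Pi$ via the intersection rule under \eqref{mqc}, feed in the coderivative estimate for $S$ from Proposition~\ref{estime-co-s}, and then specialise through Theorem~\ref{estime-co-F} and strict differentiability. The only cosmetic difference is that the paper invokes \cite[Theorem~5.3]{BaoMordukhovich} directly for the scalarisation, whereas you derive it from the max-function characterisation of weak Pareto optimality; both yield the same $w^*\in\mathbb{R}^p_+$ with $\|w^*\|=1$, and your explicit flagging that \eqref{mqc} is needed in addition to GVFCQ is a point the paper's proof uses but its theorem statement omits.
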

\begin{proof}
First start by noting that based on \eqref{mapmapmap},  problem \eqref{optimistic}--\eqref{llpill} can be rewriting as
\begin{equation*}
\mathbb{R}^{p}_+ -\min~F(x,y) \;\; \mbox{ s.t. }\;\; (x, y) \in \Pi.
\end{equation*}
Since the set $\Pi$ is closed, as intersection of two closed sets, and the function $F$ is Lipschitz continuous around the point $(\bar x, \bar y)$, which is a local efficient/weakly efficient Pareto point for problem \eqref{optimistic}--\eqref{llpill}, there exists $w^*\in \mathbb{R}^p_+$ with $\|w^*\|=1$ such that we have
 \begin{equation}\label{cn}
   \partial \langle w^*, \; F\rangle (\bar x, \bar y) + N\left((\bar x, \bar y); \;\Pi\right),
 \end{equation}
 according to \cite[Theorem 5.3]{BaoMordukhovich}. Hence, it suffices now to calculate an upper estimate of the normal cone to $\Pi$.
Using the intersection rule from \cite[Corollary 3.5]{mordu3}, one has
\begin{equation}\label{cosolmap}
N\left( \left(\bar{x},\bar{y}\right);\;\Pi\right)  \subset N\left( \left(\bar{x},\bar{y}\right); \; X\times\mathbb{R}^{m}\right) +N\left( \left(\bar{x},\bar{y}\right);\; \text{gph}~S\right)
\end{equation}
as $X$ and $\text{gph}~S$ are locally closed around $\bar{x}$ and $\left(\bar{x}, \bar{y}\right)$, respectively, and provided that  \begin{equation}\label{cq-cq}
N\left(\left(\bar{x},\bar{y}\right);\; X\times \mathbb{R}^{m}\right)  \cap
\left(-N\left(\left(\bar{x},\bar{y}\right);\; \text{gph}~S\right)\right)   =\{0\}.
\end{equation}
is satisfied. Considering the coderivative calculus rules, we can easily show that the fulfilment of \eqref{mqc} implies that \eqref{cq-cq} holds.
Then combining \eqref{cn} and \eqref{cosolmap}, one gets
\begin{equation}\label{sum-cone}
0\ \in \ \partial \langle w^{*}, \; F\rangle \left(\bar{x},\bar{y}\right) + N\left(\left(\bar{x},\bar{y}\right); \;X\times\mathbb{R}^{m}\right) +N\left( \left(\bar{x},\bar{y}\right); \;\text{gph} \ S\right).
\end{equation}
Hence, there exist
$\left( x^{*}, y^{*}\right)\in \partial \langle w^{*}, F\rangle \left(\bar{x},\bar{y}\right)$ and $c^{*}\in N\left(\bar{x}; \;X\right)$ such that
\begin{equation}\label{Patou}
\left( -x^{*}-c^{*},-y^{*}\right) \in N\left( \left(\bar{x},\bar{y}\right);\; \text{gph} \ S\right) \;\, \mbox{ or equivalently, } \;\, -x^{*}-c^{*} \in D^{*}S\left(\bar{x},\bar{y}\right)\left( y^{*}\right).
\end{equation}
Thanks to the upper estimate of coderivative of the optimal  solution set-valued mapping $S$ in  Proposition \ref{estime-co-s}, we can find vectors $u^{*}$, $v^{*}$, $a^{*}$ and $b^{*}$ such that
\begin{equation}\label{Patata}
\left.\begin{array}{r}
 u^{*} \in D^{*}\varPhi\left( \bar{x}, \,f(\bar x, \bar y)\right) \left( -v^{*}\right)   \\
 \left( a^{*},b^{*}\right) \in D^{*}f\left( \bar{x},\bar{y}\right) \left( v^{*}\right) \\
 -x^{*}-c^{*} \in u^{*}+a^{*}+D^{*}Y\left( \bar{x}, \bar{y}\right) \left( y^{*}+b^{*}\right)
\end{array}\right\}
\end{equation}
given that gph$Y$ and gph$\varPhi$ are closed, and $f$ is Lipschitz continuous around $(\bar x, \bar y)$.
Then combining \eqref{sum-cone}, \eqref{Patou}, and \eqref{Patata}, we immediately arrive at \eqref{equ-before}.

As for the inclusion in \eqref{insert-1}, it follows from Theorem \ref{estime-co-F} that one has an upper estimate for coderivative of frontier map $\varPhi^{E}$ with respect to local Pareto optimality concept
\begin{equation}\label{comap1}
D^{*}\varPhi^{E}\left( \bar{x},\bar{z}\right)\left( -v^{*}\right) \subseteq {\displaystyle \bigcup_{\left( \alpha^{*},\beta^{*}\right)\in D^{*}f\left( \bar{x},\bar{y}\right)\left( -v^{*}\right) }} \bigg [ \alpha^{*}+D^{*}Y\left( \bar{x},\bar{y}\right)\left( \beta^{*}\right)\bigg ]
\end{equation}
considering the assumptions made. Substituting $\left( \ref{comap1}\right)$ into  $\left( \ref{equ-before}\right)$, it follows that we can find $\left( \alpha^{*},\beta^{*}\right) \in D^{*}f\left( \bar{x},\bar{y}\right)\left( -v^{*}\right)$ such that we have \eqref{insert-1}, which obviously leads to \eqref{thm-cn1fp1} under the additional differentiability assumptions.
\end{proof}

\begin{rem}
Recall that the CQ \eqref{mqc} is automatically satisfied  at $\left( \bar{x},\bar{y}\right)$ provided that problem \eqref{optimistic}--\eqref{llpill}  has no upper-level constraints (i.e., $X =\mathbb{R}^{n}$) or the lower-level optimal solution set-valued mapping $S$ is Lipschitz-like  around $\left( \bar{x},\bar{y}\right)$, which is automatically the case if an upper estimate of $D^*S(\bar x, \bar y)(0)$ from Proposition \ref{estime-co-s} equals to zero, thanks to the Mordukhovich criterion \cite{mordu2, mordu3}; also see \cite{zemko1, zemko0Est} for further details and references.
\end{rem}

To have a clear view of the fact that the necessary optimality conditions obtained in Theorem \ref{cnnsooth} represent a natural extension of the those from a standard optimistic bilevel optimization problem, consider problem \eqref{optimistic}--\eqref{llpill} with $p=1$ and $q=1$. Let $(\bar x, \bar y)$ be a local optimal solution of the problem in this case. If the point satisfies the corresponding version of CQ \eqref{cq-cq} and $F$ and $f$ are strictly differentiable, then we have
\begin{equation}\label{Fatou}
0\in \nabla_x F(\bar x, \bar y)  + D^*S(\bar x, \bar y)(\nabla_y F(\bar x, \bar y)) + N(\bar x; \; X).
\end{equation}
This inclusion obviously coincides with \eqref{sum-cone} in this context where $w^*$ reduces to $1$. Secondly, if $\varphi$ denotes the optimal value function of the corresponding parametric optimization problem  \eqref{llpill} and we additionally suppose that the function $\varphi$ is lower semicontinuous around $\bar x$ and the set-valued mapping
\begin{equation}\label{CalPhi2New}
\Psi_{\varphi}(v):=\left\{\left( x,y\right) \in \text{gph} \ Y: \;\; f(x,y)-\varphi(x) + v =0\right\}.
\end{equation}
is calm at $(0, \bar x, \bar y)$, then condition \eqref{Fatou} can be detailed further to obtain
\begin{equation}\label{Fatouna}
0\in \nabla F(\bar x, \bar y) + \nabla f(\bar x, \bar y)v^* + \partial \langle -v^*, \; \varphi \rangle (\bar x) \times \{0\} + N((\bar x, \bar y); \; \mbox{gph}~Y) + N(\bar x; \; X)\times \{0\}
\end{equation}
for some $v^*\in \mathbb{R}$. Similarly, this coincides with \eqref{equ-before} for $w^*=1$. Note that the set-valued mapping \eqref{CalPhi2New} is slightly different from \eqref{CalPhi2}, as in the latter case, we instead have an inequality on the perturbed value function constraint. Of course, using the version of the set-valued mapping in \eqref{CalPhi2} would have led to $v^*\geq 0$.

Finally, still in the case  $p=1$ and $q=1$, if $S$ is inner semicontinuous and has a closed graph around $(\bar x, \bar y)$, and $\varphi$ is Lipschitz continuous around $\bar x$, then from \eqref{Fatouna}, we have inclusion \eqref{thm-cn1fp1} with the corresponding $w^*=1$. For more background details on the constructions and relevant concepts above, in the context of standard optimistic optimization, interested readers are referred to \cite{zemko1}, where, unlike in \eqref{thm-cn1fp1}, a scalarization approach is used to deal with the lower-level multiobjective problem, as, to the best of our knowledge, it is the case for all previous references on necessary optimality conditions for multiobjective bilevel optimization. 

\section{Application to smooth constraint functionals}
Let us consider the multiobjective bilevel optimization problem \eqref{optimistic}--\eqref{llpill} in the case where the upper- and lower-level feasible sets are defined by
\[
X:=\left\{x\in \mathbb{R}^n:\; G(x)\leq 0\right\} \; \mbox{ and }\; Y(x):=\left\{y\in \mathbb{R}^m:\; g(x, y)\leq 0\right\},
\]
respectively, with $G :\mathbb{R}^n \longrightarrow \mathbb{R}^r$ and $g :\mathbb{R}^n\times \mathbb{R}^m \longrightarrow \mathbb{R}^s$ being continuously differentiable functions. The upper-level regularity condition will be said to hold at $\bar x$ if there exists a vector $d\in \mathbb{R}^n$ such that we have
\[
\nabla G_i(\bar x)^\top d < 0 \;\, \mbox{ for all }\;\, i\in I_G(\bar x):=\left\{i\in \{1, \ldots, r\}:\;\, G_i(\bar x)=0\right\}.
\]
Similarly, the lower-level regularity condition will be satisfied at $(\bar x, \bar y)$ if
if there exists a vector $d\in \mathbb{R}^{n+m}$  that verifies
\[
\nabla_y g_j(\bar x, \bar y)^\top d < 0 \;\, \mbox{ for all }\;\, j\in I_g(\bar x, \bar y):=\left\{j\in \{1, \ldots, s\}:\;\, g_j(\bar x, \bar y)=0\right\}.
\]
Obviously, these upper- and lower-level regularity conditions correspond to the Mangasarian-Fromovitz constraint qualification for the feasible set of the corresponding (upper- or lower-) level of our problem \eqref{optimistic}--\eqref{llpill}. It is well-known that under the upper- and lower-level regularity conditions, we respectively have
\[
N(\bar x; \; X) = \left\{\nabla G(\bar x)^\top u: \; u\geq 0, \; u^\top G(\bar x)=0\right\}
\]
and
\[
D^*Y(\bar x, \bar y)(y^*) = \left\{\nabla_x g(\bar x, \bar y)^\top v:\; -y^*= \nabla_y g(\bar x, \bar y)^\top v, \; v\geq 0,\; v^\top g(\bar x, \bar y)=0\right\}.
\]

Now, in addition to all the assumptions of Theorem \ref{cnnsooth}, we assume that the upper- and lower-level regularity conditions at $\bar x$ and $(\bar x, \bar y)$, respectively, it follows from \eqref{thm-cn1fp1} that there exist $w^*\in \mathbb{R}^p_+$ with $\|w^*\|=1$, $v^*\in \mathbb{R}^q$, $u\in \mathbb{R}^r$, and $v\in \mathbb{R}^s$ satisfying the relationships
\begin{eqnarray}
\nabla_y f(\bar x, \bar y)^\top (-v^*) +  \nabla_y g(\bar x, \bar y)^\top v=0,\label{Uka1}\\
u\geq 0, \;\; G(\bar x)\leq 0, \;\; u^\top G(\bar x)=0, \label{Uka2}\\
v\geq 0, \;\; g(\bar x, \bar y)\leq 0, \;\; v^\top g(\bar x, \bar y)=0,\label{Uka3}
\end{eqnarray}
and such that
\[
- \nabla_x F\left(\bar{x}, \bar{y}\right)^* w^{*} - \nabla G(\bar x)^\top u
 - \nabla_x g(\bar x, \bar y)^\top v \in D^{*}Y\left(\bar{x},\bar{y}\right)\left(\nabla_{y}F\left(\bar{x},\bar{y}\right)^{*}w^{*} +  \nabla_{y}f\left(\bar{x},\bar{y}\right)^{*}v^{*}\right).
\]
Then from a second application of the above coderivative formula for $D^*Y(\bar x, \bar y)(y^*)$ to the latter inclusion, it follows that we can find $w\in \mathbb{R}^s$, $w^*\in \mathbb{R}^p_+$ with $\|w^*\|=1$, $u\in \mathbb{R}^r$, and $v\in \mathbb{R}^s$ such that the relationships \eqref{Uka1}--\eqref{Uka3} hold together with
\begin{eqnarray}
\nabla F(\bar x, \bar y)^\top w^* + \left[\begin{array}{c}
                                            \nabla G(\bar x)^\top u\\
                                            0
                                          \end{array}\right]
 + \nabla g(\bar x, \bar y)^\top (v + w) =0,\label{Uka4}\\
w\geq 0, \;\; g(\bar x, \bar y)\leq 0, \;\; w^\top g(\bar x, \bar y)=0.\label{Uka5}
\end{eqnarray}
The optimality conditions \eqref{Uka1}--\eqref{Uka5} are very similar to their standard optimistic bilevel optimization problem counterpart with scalar objective functions, as it can be seen in \cite{DDM2007,DZ2011}, for example. The corresponding conditions in the latter paper have been shown in the recent papers \cite{FischerZemkohoZhou,FliegeTinZemkoho,ZemkohoZhouTheoretical} to be suitable to efficiently solve standard optimistic bilevel optimization problem. Hence, the extension of the methods in these papers to multiobjective bilevel programs will be explored in future works.

Finally, to end this section, we provide the following illustrative example, where the lower-level problem is the linear parametric multiobjective problem from Example \ref{FirstExample}.
\begin{exmp}
Consider problem \eqref{optimistic}--\eqref{llpill}, where $F :\mathbb{R}^2\times \mathbb{R}^2 \rightarrow \mathbb{R}^p$ is any differentiable function and  $X$ and the lower-level problem are defined as in Example \ref{FirstExample}. We can easily check that for all $x\in X$, $Y(x):=\left\{y\in\mathbb{R}^2|\; Ax + By\leq d \right\}$ is bounded. Hence, the strong domination property \eqref{dompro} is satisfied according to Proposition \ref{StrongDominationScenario}. Furthermore, all the other assumptions of Theorem \ref{cnnsooth} hold. Hence, for any local efficient Pareto point $(\bar x, \bar y)$ of the problem,
\begin{eqnarray*}
\nabla_x F(\bar x, \bar y)^\top w^* - \left(\begin{array}{c}
                                              u_1\\
                                              u_2
                                            \end{array}\right) - \left(\begin{array}{c}
                                              v_5 + w_5\\
                                              v_6 + w_6
                                            \end{array}\right)=0,\\
\nabla_y F(\bar x, \bar y)^\top w^* + \left(\begin{array}{r}
                                              v_1+w_1 - v_2-w_2 + v_5+w_5\\
                                              2(v_3 + w_3) -v_4-w_4 + v_6 +w_6
                                            \end{array}\right)=0,\\
\left(\begin{array}{r}
                                              2v^*_1\\
                                              v^*_2
                                            \end{array}\right) - \left(\begin{array}{r}
                                              v_1 - v_2 + v_5\\
                                              2v_3 -v_4 + v_6
                                            \end{array}\right)=0,\\
u_1\geq 0, \;\; y_1\geq 4, \;\; u_1 (y_1 - 4)=0,\\
u_2\geq 0, \;\; y_2\geq 3, \;\; u_2 (y_2 - 3)=0,\\
v\geq 0, \;\; A\bar x + B\bar y \leq d, \;\; v\left(A\bar x + B\bar y -d\right)=0,\\
w\geq 0, \;\; A\bar x + B\bar y \leq d, \;\; w\left(A\bar x + B\bar y -d\right)=0,
\end{eqnarray*}
for some $u\in \mathbb{R}^2$, $v\in \mathbb{R}^6$, $w\in \mathbb{R}^6$, $v^{*}\in \mathbb{R}^{2}$, and  $w^{*}\in {\mathbb{R}}_{+}^{p}$ with $\| w^{*}\| =1$. Note that here, the Pareto efficient solution concept is also considered for the lower-level problem. The matrices $A$, $B$, and $d$ in the last two lines of this system are given in  \eqref{MaterialExample}.
\end{exmp}
\section*{Acknowledgements}  The work of AZ is supported by the EPSRC grant EP/V049038/1 and the Alan Turing Institute for Data Science and Artificial Intelligence under the EPSRC grant EP/N510129/1

\section*{Data availability statement}
No data is needed in this paper.

\nocite{*}


\begin{thebibliography}{99}
\bibitem{ABB} Arrow, K.J., Barankin, E.W., Blackwell, D.: Admissible points of convex sets, in
Contributions to the Theory of Games, H. W. Kuhn and A. W. Tucker, eds., Princeton
University Press, Princeton, New Jersey, vol 2, 87-91 (1953)


\bibitem{BaoMordukhovich} Bao T.Q., Mordukhovich B.S.: Relative Pareto minimizers for multiobjective problems: existence and optimality conditions. Mathematical Programming 122(2), 301-47 (2010)




\bibitem{bao-mor} Bao, T. Q., Mordukhovich, B.S.: Necessary conditions for super minimizers in
constrained multiobjective optimization. Journal of Global Optimization 43, 533-552 (2009)





\bibitem{vectlower1} Bonnel, H.: Optimality conditions for the semivectorial bilevel optimization problem. Pacific Journal of Optimization
2(3), 447-467 (2006)



\bibitem{DDM2007} Dempe, S., Dutta, J., Mordukhovich, B.S.: New necessary optimality conditions in optimistic bilevel programming. Optimization 56(5-6), 577-604 (2007)

\bibitem{vectlower2} Dempe, S., Gadhi, N., Zemkoho, A.B. : New optimality conditions for the semivectorial bilevel optimization
problem. Journal of Optimization Theory and Applications 157, 54-74 (2013)

\bibitem{DM202} Dempe, S., Mehlitz, P.: Semivectorial bilevel programming versus scalar bilevel programming. Optimization 69(4), 657-679  (2020)

\bibitem{DemMorZemSIAM} Dempe, S., Mordukhovich, B.S., Zemkoho, A.B.: Sensitivity analysis for two-level value functions with applications to bilevel programming. SIAM Journal on Optimization 22(4), 1309-1343 (2012)

\bibitem{DZ2011} Dempe, S., Zemkoho, A.B.: The generalized Mangasarian-Fromowitz constraint qualification and optimality conditions for bilevel programs. Journal of Optimization Theory and Applications 148(1), 46-68 (2011)


\bibitem{DempeZemkohoBook} Dempe, S., Zemkoho, A.B.  (eds.), Bilevel optimization: advances and next challenges,  Springer (2020)



\bibitem{Eichfelder2020} Eichfelder, G.: Methods for multiobjective bilevel optimization. In S. Dempe and A. Zemkoho (eds.), Bilevel Optimization Springer, Cham. 423-449 (2020)



\bibitem {FischerZemkohoZhou}  Fischer, A., Zemkoho, A.B., Zhou, S. : Semismooth Newton-type method for bilevel optimization: Global convergence and
extensive numerical experiments.  Optimization Methods and Software, in press, (2021) \href{DOI:10.1080/10556788.2021.1977810}{DOI:10.1080/10556788.2021.1977810}

\bibitem {FliegeTinZemkoho}  Fliege, J., Tin, A., Zemkoho, A.B.: Gauss Newton-type methods for bilevel optimization. Computational Optimization and Applications  78, 793-824 (2021)


\bibitem{henrion} Henrion, R., Jourani, A., Outrata, J.: On the calmness of a class of multifunctions. SIAM Journal on Optimization 13(2), 603-618 (2002)

\bibitem{henrionSuroview} Henrion, R., Surowiec, T.: On calmness conditions in convex bilevel programming. Applicable Analysis 90(6), 951-970 (2011)



\bibitem{hoffman1} Hui, H., Qing,  W. : On approximate solutions of infinite systems of linear inequalities. Linear Algebra and Its Applications 114/115, 429-438 (1989)


\bibitem{mordu1} Huy, N.Q., Mordukhovich, B.S., Yao, J. C. : Coderivatives of frontier and solution maps in parametric
multiobjective optimization. Taiwanese Journal of Mathematics 12, 2083-2111 (2008)





\bibitem{levy} Levy, A.B.: Nonsingularity conditions for multifunctions. Set-Valued and Variational Analysis 7, 89-99 (1999)

\bibitem{li-xue} Li, S.J., Xue, X.W.: Sensitivity analysis of gap functions for vector variational inequality via coderivatives. Optimization
63(7), 1075-1098 (2014)


\bibitem{luc} Luc, D. T.:  Theory of Vector Optimization, Springer-Verlag, Berlin, 1989.










\bibitem{MehlitzMinchenko2020} Mehlitz, P.,  Minchenko, L.I. : R-regularity of set-valued mappings under the relaxed constant positive linear dependence constraint qualification with applications to parametric and bilevel optimization. Set-Valued and Variational Analysis  (2020)
    \href{https://doi.org/10.1007/s11228-021-00578-0}{https://doi.org/10.1007/s11228-021-00578-0}



\bibitem{mordu2} Mordukhovich, B.S. : Variational Analysis and Generalized Differentiation. I: Basic Theory. Springer,
Berlin (2006)

\bibitem{mordu3} Mordukhovich, B.S.: Variational Analysis and Generalized Differentiation. II. Applications. Springer,
Berlin (2006)





\bibitem{rock} Rockafellar, R.T.: Convex Analysis. Princeton University Press, Princeton, New Jersey (1970)


\bibitem{SinhaMaloDeb2017} Sinha, A., Malo, P., Deb, K.: A review on bilevel optimization: from classical to evolutionary approaches and applications. IEEE Transactions on Evolutionary Computation 22(2), 276-295 (2017)


\bibitem{Tanino} Tanino, T.: Sensitivity Analysis in Multiobjective Optimization. Journal of Optimization and Applications 56(3), 479-499 (1988)

\bibitem{liao} Xue, X.W., Li, S.J., Lia, C.M., Yao, J.C.: Sensitivity analysis of parametric vector set-valued optimization problems via
coderivatives. Taiwanese Journal of Mathematics. 15(6) 2533-2554 (2011)


\bibitem{Ye1998}  Ye, J.J.: New uniform parametric error bounds. Journal of Optimization Theory and Applications 98(1), 197-219 (1998)


 \bibitem{YeZhu1995} Ye, J.J., Zhu, D.L.: Optimality conditions for bilevel programming problems. Optimization 33, 9-27 (1995)

  \bibitem{YeZhu1995Cor} Ye, J.J., Zhu, D.L.: A note on optimality conditions for bilevel programming problems. Optimization 39, 361-366 (1997)



\bibitem{zemko0Est} Zemkoho, A.B.: Estimates of generalized Hessians for optimal value functions in mathematical programming. Set-Valued and Variational Analysis, in press, (2021) \href{https://arxiv.org/abs/1710.05887}{https://arxiv.org/abs/1710.05887}

\bibitem{zemko1} Zemkoho, A.B.: Solving ill-posed bilevel programs. Set-Valued and Variational Analysis 24, 423-448
(2016)

	
\bibitem {ZemkohoZhouTheoretical} Zemkoho, A.B., Zhou, S.:   Theoretical and numerical comparison of the Karush-Kuhn-Tucker and
	value function reformulations in bilevel optimization. Computational Optimization and Applications  78(2), 625-674 (2021)
\end{thebibliography}
\end{document}